\def\lexi{<_{\rm lex}}
\def\NZQ{\mathbb}               
\def\ZZ{{\NZQ Z}}
\def\RR{{\NZQ R}}
\def\bold{\mathbf}
\def\eb{{\bold e}}
\def\tb{{\bold t}}
\def\opn#1#2{\def#1{\operatorname{#2}}} 
\opn\chara{char}
\opn\length{\ell}
\opn\pd{pd}
\opn\rk{rk}
\opn\projdim{proj\,dim}
\opn\injdim{inj\,dim}
\opn\rank{rank}
\opn\depth{depth}
\opn\grade{grade}
\opn\height{height}
\opn\embdim{emb\,dim}
\opn\codim{codim}
\opn\Tr{Tr}
\opn\bigrank{big\,rank}
\opn\superheight{superheight}\opn\lcm{lcm}
\opn\trdeg{tr\,deg}%
\opn\reg{reg}
\opn\lreg{lreg}
\opn\skel{skel}
\opn\com{com}
\opn\div{div}
\opn\Div{Div}
\opn\cl{cl}
\opn\Cl{Cl}
\opn\Spec{Spec}
\opn\Supp{Supp}
\opn\supp{supp}
\opn\Sing{Sing}
\opn\Ass{Ass}
\opn\Ann{Ann}
\opn\Rad{Rad}
\opn\Soc{Soc}
\opn\Ker{Ker}
\opn\Coker{Coker}
\opn\Im{Im}
\opn\Hom{Hom}
\opn\Tor{Tor}
\opn\Ext{Ext}
\opn\End{End}
\opn\Aut{Aut}
\opn\id{id}
\opn\nat{nat}
\opn\pff{proof}
\opn\Pf{Pf}
\opn\GL{GL}
\opn\SL{SL}
\opn\mod{mod}
\opn\ord{ord}
\opn\aff{aff}
\opn\con{conv}
\opn\relint{relint}
\opn\st{st}
\opn\lk{lk}
\opn\cn{cn}
\opn\core{core}
\opn\vol{vol}
\opn\link{link}
\opn\star{star}
\opn\gr{gr}
\def\pot#1#2{#1[\kern-0.28ex[#2]\kern-0.28ex]}
\opn\dirlim{\underrightarrow{\lim}}
\opn\inivlim{\underleftarrow{\lim}}
\let\to=\rightarrow
\def\Implies{\ifmmode\Longrightarrow \else
     \unskip${}\Longrightarrow{}$\ignorespaces\fi}
\def\implies{\ifmmode\Rightarrow \else
     \unskip${}\Rightarrow{}$\ignorespaces\fi}
\def\iff{\ifmmode\Longleftrightarrow \else
     \unskip${}\Longleftrightarrow{}$\ignorespaces\fi}
\newtheorem{Theorem}{Theorem}[section]
\newtheorem{Lemma}[Theorem]{Lemma}
\newtheorem{Proposition}[Theorem]{Proposition}
\newtheorem{Example}[Theorem]{Example}
\let\epsilon\varepsilon
\let\phi=\varphi
\let\kappa=\varkappa
\opn\initial{in}
\opn\inim{inm}
\opn\rev{rev}
\opn\Gin{Gin}
\opn\Lex{Lex}
\opn\Shift{Shift}
\opn\shift{shift}
\opn\rate{rate}
\opn\Mon{Mon}
\opn\lex{lex}
\opn\rev{rev}
\opn\red{red}
\opn\max{max}
\opn\min{min}
\opn\initial{in}
\opn\Ker{Ker}
\opn\GL{GL}
\opn\proj{proj}
\begin{document}
\date{}
\title{\bf
Simple polytopes arising from finite graphs
}

\author{Hidefumi Ohsugi and Takayuki Hibi}

\maketitle                        




\begin{abstract}
Let $G$ be a finite graph allowing loops, 
having no multiple edge and no isolated vertex.
We associate $G$
with the edge polytope ${\cal P}_G$
and the toric ideal $I_G$.
By classifying graphs whose edge polytope is simple,
it is proved that  
the toric ideals $I_G$ of $G$
possesses a quadratic Gr\"obner basis
if the edge polytope ${\cal P}_G$ 
of $G$ is simple.
It is also shown that, for a finite graph $G$, 
the edge polytope is simple but not a simplex if and only if 
it is smooth but not a simplex.
Moreover, the Ehrhart polynomial and the normalized volume
of simple edge polytopes are computed.
\end{abstract}

\section*{Introduction}

Let 
$G$
be a finite graph on the vertex set
$V(G) = \{ 1, \ldots, d \} $
allowing loops, 
having no multiple edge and no isolated vertex.
Let $E(G) = \{ e_1, \ldots, e_n \}$
denote the set of edges (and loops) of $G$.  
If $e = \{ i, j \}$ is an edge
of $G$ between $i \in V(G)$ and $j \in V(G)$, 
then we define $\rho(e) = \eb_i + \eb_j$.
Here $\eb_i$ is the $i$th unit coordinate 
vector of $\RR^d$.
In particular, for a loop 
$e = \{ i, i \}$ at $i \in V(G)$,
one has $\rho(e) = 2 \eb_i$.
The {\em edge polytope} of $G$  
is the convex polytope ${\cal P}_G$
$(\subset \RR^d)$ which is the convex hull of
the finite set 
$\{ \rho(e_1), \ldots, \rho(e_n) \}$.
If $e = \{ i, j \}$ is an edge of $G$,  
then $\rho(e)$ cannot be a vertex of ${\cal P}_G$ 
if and only if $i \neq j$ and $G$ has a loop
at each of the vertices $i$ and $j$. 
With considering this fact, 
throughout the present paper, we assume that
$G$ satisfies the following condition:
\begin{enumerate}
\item[$(*)$]
If $i$, $j \in V(G)$ and 
if $G$ has a loop at each of $i$ and $j$, 
then the edge $\{ i, j \}$ belongs to $G$. 
\end{enumerate}
Let $K[\tb] = K[t_1, \ldots, t_d]$ 
denote the polynomial ring in $d$ variables 
over $K$.
If $e = \{ i, j \}$ is an edge of $G$, then
$\tb^e$ stands for the monomial $t_it_j$
belonging to $K[\tb]$.
Thus in particular, if
$e = \{ i, i \}$ is a loop of $G$ at $i \in V(G)$,
then $\tb^e = t_i^2$.
The {\em edge ring} of $G$ is the affine 
semigroup ring $K[G]$ $(\subset K[\tb])$ 
which is generated by
$\tb^{e_1}, \ldots, \tb^{e_n}$ over $K$.
Let ${\cal R}_G = K[\{x_{i j}\}_{\{i,j\} \in E(G)}]$ denote
the polynomial ring in $n$ variables over $K$.
The {\em toric ideal} of $G$ is the ideal
$I_G$ $(\subset {\cal R}_G)$ which is the kernel
of the surjective ring homomorphism
$\pi : {\cal R}_G \to K[G]$ defined by setting
$\pi(x_{i j}) = t_i t_j$ for $\{i,j\} \in E(G)$.
A convex polytope ${\cal P}$
of dimension $d$ is {\em simple}
if each vertex of ${\cal P}$
belongs to exactly $d$ edges of ${\cal P}$.
A simple polytope ${\cal P}$ is {\em smooth}
if at each vertex of ${\cal P}$,
the primitive edge directions form a lattice basis.

Our goal is as follows:
(i)
To classify graphs whose edge polytope is simple (Theorem \ref{bunrui}),
(ii)
To show the existence of a quadratic and squarefree initial ideal
of toric ideals arising from simple edge polytopes (Theorem \ref{Boston}),
(iii)
To compute the Ehrhart polynomial and the normalized volume of 
simple edge polytopes (Theorem \ref{normalizedvolume}).

\section{Simple edge polytopes}

In this section, we classify all finite graphs $G$
for which the edge polytope ${\cal P}_G$
is simple.   

A {\em closed walk} of $G$ of length $q$ is a sequence 
$(e_{i_1}, e_{i_2}, \ldots, e_{i_q})$
of edges of $G$, 
where 
$e_{i_k} = \{ u_k, v_k \}$
for $k = 1, \ldots, q$, such that
$v_k = u_{k+1}$ for $k = 1, \ldots, q - 1$ 
together with $v_{q} = u_{1}$.
Such a closed walk is called a {\em cycle}
of length $q$ 
if $u_k \neq u_{k'}$ for all $1 \leq k < k' \leq q$.
In particular, a loop is a cycle of length 1.
Let $\Gamma = (e_{i_1}, e_{i_2}, \ldots, e_{i_{2 q}})$
where 
$e_{i_k} = \{ u_k, v_k \}$
for $k = 1, \ldots, 2 q$ be an even closed walk of $G$.
Then it is easy to see that
$f_{\Gamma} = \prod_{\ell=1}^q x_{u_{2 \ell -1} v_{2 \ell -1}} -  \prod_{\ell=1}^q x_{u_{2 \ell} v_{2 \ell}}$
belongs to $I_G$.
An even closed walk $\Gamma$ of $G$ is called {\it trivial} if $f_\Gamma = 0$.
It is known \cite[Lemma 1.1]{OhHibinomial}
that 
\begin{Proposition}
\label{evenclosedwalk}
Let $G$ be a finite graph.
Then
$\{ f_\Gamma \ | \ \Gamma \mbox{ is a nontrivial even closed walk of } G \}$
is a set of generators of $I_G$.
In particular,
$G$ has no nontrivial even closed walk
if and only if $I_{G} = (0)$.
In addition, if $G$ has at most one loop, then
its edge polytope ${\cal P}_{G}$ is a simplex
if and only if $I_{G} = (0)$.
\end{Proposition}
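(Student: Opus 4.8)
The first assertion is precisely \cite[Lemma 1.1]{OhHibinomial}, which I take as given. The ``in particular'' clause then follows at once: if $G$ has no nontrivial even closed walk, the set of generators exhibited there is empty, so $I_G=(0)$; conversely, a nontrivial even closed walk $\Gamma$ yields a nonzero element $f_\Gamma\in I_G$, so $I_G\neq(0)$.

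For the last assertion my plan is to collapse both conditions to a single statement about the vectors $\rho(e_1),\ldots,\rho(e_n)$. First I would observe that, since $G$ has at most one loop, at most one vertex of $G$ carries a loop; by the criterion recalled just before condition $(*)$ --- that $\rho(e)$ fails to be a vertex of ${\cal P}_G$ exactly when $e=\{i,j\}$ with $i\neq j$ and $G$ has loops at both $i$ and $j$ --- it follows that every $\rho(e)$ is a vertex of ${\cal P}_G$. Since $G$ has no multiple edge these $n$ lattice points are pairwise distinct, so the vertex set of ${\cal P}_G$ is exactly $\{\rho(e_1),\ldots,\rho(e_n)\}$, and ${\cal P}_G$ is a simplex if and only if these $n$ points are affinely independent.

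Next I would use that every $\rho(e)$ lies on the hyperplane $\{x\in\RR^d : x_1+\cdots+x_d=2\}$, which avoids the origin. Pairing any linear relation $\sum_i\lambda_i\rho(e_i)=0$ with the functional $x\mapsto x_1+\cdots+x_d$ forces $\sum_i\lambda_i=0$, so every linear dependence among the $\rho(e_i)$ is already an affine dependence; hence affine independence and linear independence of $\rho(e_1),\ldots,\rho(e_n)$ coincide. Thus ${\cal P}_G$ is a simplex if and only if $\rho(e_1),\ldots,\rho(e_n)$ are linearly independent over $\QQ$.

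Finally I would appeal to the standard correspondence for toric ideals: the monomials $\tb^{e_1},\ldots,\tb^{e_n}$, whose exponent vectors are $\rho(e_1),\ldots,\rho(e_n)$, are algebraically independent over $K$ if and only if these exponent vectors are linearly independent, and their algebraic independence is exactly the injectivity of $\pi$, i.e.\ $I_G=(0)$. Chaining the two equivalences then gives that ${\cal P}_G$ is a simplex if and only if $I_G=(0)$. The only delicate point is the opening step: it is the hypothesis of at most one loop that lets me identify the vertex set of ${\cal P}_G$ with the entire generating set $\{\rho(e_i)\}$, and once that identification is secured the remainder is the routine hyperplane-based equivalence of affine and linear independence together with the toric dictionary.
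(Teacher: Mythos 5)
Your proposal is correct. Note that the paper itself offers no proof of this proposition: the entire statement is quoted as known, with a citation to \cite[Lemma 1.1]{OhHibinomial}, so there is no internal argument to compare against. Your handling of the ``in particular'' clause is the obvious one and is fine. For the final assertion you supply a genuinely self-contained derivation, and it is sound: the hypothesis of at most one loop is used exactly where it must be, namely to rule out the configuration ($e=\{i,j\}$ with $i\neq j$ and loops at both $i$ and $j$) in which $\rho(e)$ fails to be a vertex, so that the vertex set of ${\cal P}_G$ is all of $\{\rho(e_1),\ldots,\rho(e_n)\}$ and ``simplex'' becomes ``affinely independent generators.'' The passage from affine to linear independence via the hyperplane $x_1+\cdots+x_d=2$ (which misses the origin) is correct, as is the toric dictionary identifying $I_G=(0)$ with algebraic independence of the monomials $t_it_j$, equivalently with $\QQ$-linear independence of their exponent vectors. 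Without the loop hypothesis the equivalence genuinely fails (e.g.\ $E(G)=\{\{1,1\},\{2,2\},\{1,2\}\}$ gives a segment, hence a simplex, while $x_{12}^2-x_{11}x_{22}\in I_G$), so your closing remark about where the hypothesis is consumed is exactly the right delicate point to flag.
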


Let $\widetilde{G}$ denote the subgraph of $G$
with the edge set
$
E(\widetilde{G}) = 
E(G) \setminus 
\{
\{i,j\} \in E(G) \ | \  i \neq j \mbox{ and } \{i,i\} , \{j,j\} \in E(G)
\}.
$
Then we have ${\cal P}_G = {\cal P}_{\widetilde{G}}$.
The following Propositions
are known \cite{normal, Stu}.

\begin{Proposition}
\label{simplexcondition}
Let $G$ be a finite graph.
Then ${\cal P}_G$ is a simplex
if and only if $\widetilde{G}$ satisfies 
both
(i) each connected component of $\widetilde{G}$ has at most one cycle
and
(ii) the length of any cycle of $\widetilde{G}$ is odd. 
\end{Proposition}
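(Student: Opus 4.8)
The plan is to reduce the simplex question to the vanishing of the toric ideal and then to a purely graph-theoretic statement about even closed walks, with Proposition \ref{evenclosedwalk} as the main engine. Throughout I work with $\widetilde{G}$ in place of $G$, which is legitimate since ${\cal P}_G = {\cal P}_{\widetilde{G}}$ and since the vertices of ${\cal P}_{\widetilde{G}}$ are precisely the $|E(\widetilde{G})|$ distinct lattice points $\rho(e)$, $e \in E(\widetilde{G})$. Consequently ${\cal P}_G$ is a simplex if and only if these points are affinely independent.

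First I would convert affine independence into the vanishing of $I_{\widetilde{G}}$. Every point $\rho(e)$ lies on the affine hyperplane $\{x \in \RR^d : \sum_i x_i = 2\}$, which does not pass through the origin; hence the points $\rho(e)$ are affinely independent if and only if they are linearly independent in $\RR^d$, i.e. if and only if the integer matrix with columns $\rho(e)$ has rank $|E(\widetilde{G})|$. This in turn is equivalent to injectivity of the induced map $\ZZ^{E(\widetilde{G})} \to \ZZ^d$, that is, to $I_{\widetilde{G}} = (0)$. This observation extends the last assertion of Proposition \ref{evenclosedwalk} (stated there only when $G$ has at most one loop) to $\widetilde{G}$, whose components may contain several loops. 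Combined with Proposition \ref{evenclosedwalk}, it yields the reformulation: ${\cal P}_G$ is a simplex if and only if $\widetilde{G}$ has no nontrivial even closed walk.

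It then remains to prove the combinatorial equivalence that $\widetilde{G}$ has no nontrivial even closed walk if and only if conditions (i) and (ii) hold. For the forward direction I argue by contraposition, exhibiting explicit nontrivial even closed walks: if a component contains an even cycle of length $2k$, then traversing it once gives an even closed walk whose odd- and even-position edges form disjoint, hence distinct, sets, so $f_\Gamma \neq 0$; if a component contains two distinct cycles, then it contains either an even cycle (already handled) or two odd cycles joined by a path, and traversing the first odd cycle, then the path, then the second odd cycle, then the path in reverse produces an even closed walk which one checks to be nontrivial. For the converse, assuming (i) and (ii), each component is either a tree or carries a single odd cycle, and I would show every even closed walk is trivial by a reduction argument: repeatedly cancelling an immediate backtrack $(e,e)$ removes one odd and one even position, so $f_\Gamma = x_e \, f_{\Gamma'}$ and triviality is preserved; a backtrack-free closed walk in a tree is empty, while in an odd-unicyclic component it is a traversal of the unique odd cycle repeated an even number of times, and since that cycle has odd length each of its edges then occupies one odd and one even position, forcing the two position-multisets to coincide.

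The main obstacle is this final combinatorial equivalence, and within it the converse: verifying rigorously that tree and odd-unicyclic components admit only trivial even closed walks, and that the walks built in the forward direction are genuinely nontrivial, requires careful bookkeeping of the \emph{parity of edge positions} along the walk, not merely of how often each edge is used. The first two steps are essentially formal; the graph theory of the third step is where the real work lies.
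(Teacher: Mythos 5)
The paper never proves Proposition \ref{simplexcondition}: it is quoted as known from \cite{normal, Stu}, so there is no internal argument to compare against. Your route is the natural one and is sound: every $\rho(e)$ with $e \in E(\widetilde{G})$ is indeed a vertex of ${\cal P}_{\widetilde{G}} = {\cal P}_G$, these points lie on the hyperplane $\sum_i x_i = 2$, so the simplex property is equivalent to linear independence of the $\rho(e)$, hence to $I_{\widetilde{G}} = (0)$, hence by Proposition \ref{evenclosedwalk} to the absence of nontrivial even closed walks in $\widetilde{G}$; the remainder is pure graph theory. Three points need care in a full write-up. (1) $\widetilde{G}$ generally violates the standing hypothesis $(*)$ (if $G$ has loops at $i$ and $j$, then $(*)$ forces $\{i,j\} \in E(G)$, which is exactly the edge $\widetilde{G}$ deletes), so you are invoking Proposition \ref{evenclosedwalk} outside the paper's blanket assumption; this is legitimate because the cited generation result \cite{OhHibinomial} holds for arbitrary finite graphs, but it should be said explicitly. (2) In the forward direction the two odd cycles need not be edge-disjoint; the clean fix is to note that $f_\Gamma \neq 0$ whenever some edge occurs an odd number of times in $\Gamma$ (equality of the two monomials forces every variable to have even total degree in their product), and any edge of $C_1$ not in $C_2 \cup P$ occurs exactly once in $C_1 P C_2 P^{-1}$. (3) In the converse, your structural claim about backtrack-free closed walks requires the argument that such a walk cannot enter a pendant tree (a closed walk of positive length in a tree always backtracks, seen at a visited vertex of maximal distance from the start), the separate but easy case of a component whose unique cycle is a loop, and the wrap-around backtrack at positions $2q$ and $1$, which is handled by a cyclic rotation that at worst changes the sign of $f_\Gamma$. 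None of this requires a new idea, so the proposal is correct in strategy but incomplete at precisely the point you yourself flag.
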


\begin{Proposition}
Let $G$ be a finite graph and let $G_1, \ldots, G_r$ denote
connected components of $G$.
Then, for each $1 \leq i \leq r$, $\dim {\cal P}_{G_i}$ equals to
$$
\left\{
\begin{array}{cc}
|V(G_i)| - 2 & \ \ \ \mbox{if } G_i \mbox{ has no odd cycle}\\
|V(G_i)| -1 & \ \ \ \mbox{otherwise}
\end{array}
\right.
$$
and
$\dim {\cal P}_G = r-1+ \sum_{i=1}^r \dim {\cal P}_{G_i} = | V(G)|  -r' - 1$
where $r'$ is the number of connected components of $G$ having no odd cycle.
\end{Proposition}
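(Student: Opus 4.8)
The plan is to reduce the computation of $\dim {\cal P}_G$ to a rank computation for an unsigned incidence matrix. Note first that every generator $\rho(e)=\eb_i+\eb_j$ of ${\cal P}_G$ has coordinate sum $2$, so all these points lie in the affine hyperplane $\{x : \sum_k x_k = 2\}$, which avoids the origin. Hence the affine hull of ${\cal P}_G$ does not contain $0$, and $\dim {\cal P}_G = \rank M - 1$, where $M$ is the $|V(G)| \times n$ matrix whose columns are $\rho(e_1),\ldots,\rho(e_n)$. Because distinct connected components have disjoint vertex sets, $M$ is block diagonal with one block $M_i$ for each $G_i$; thus $\rank M = \sum_{i=1}^r \rank M_i$, and componentwise $\dim {\cal P}_{G_i} = \rank M_i - 1$. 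It therefore suffices to compute $\rank M_i$ for connected $G_i$.

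The core of the proof is the classical value of $\rank M_i$, computed over $\RR$. First I would record the lower bound $\rank M_i \geq |V(G_i)| - 1$: choosing a spanning tree $T$ of $G_i$, the $|V(G_i)|-1$ columns $\{\rho(e) : e \in T\}$ are linearly independent, since a tree always has a leaf whose incident coordinate occurs in exactly one tree edge, and one peels off leaves inductively. The exact value then depends on whether $G_i$ has an odd cycle.

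If $G_i$ has no odd cycle, it is bipartite with parts $A$ and $B$, and every column satisfies $\langle \rho(e), \chi_A - \chi_B\rangle = 1 - 1 = 0$, where $\chi_A,\chi_B$ denote the characteristic vectors of $A,B$. The nonzero vector $\chi_A - \chi_B$ is therefore orthogonal to all columns, giving $\rank M_i \leq |V(G_i)| - 1$; with the tree bound this forces equality and $\dim {\cal P}_{G_i} = |V(G_i)| - 2$. If instead $G_i$ has an odd cycle $v_1,\ldots,v_{2m+1}$ (a loop being the length-$1$ case, where $\rho(e)=2\eb_i$ directly), then the alternating sum of its edge vectors collapses to $2\eb_{v_1}$, so, after relabelling the starting vertex, each cycle unit vector lies in the column span; propagating $\eb_w = \rho(\{u,w\}) - \eb_u$ along a spanning tree then shows the columns span all of $\RR^{V(G_i)}$, whence $\rank M_i = |V(G_i)|$ and $\dim {\cal P}_{G_i} = |V(G_i)| - 1$. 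This odd-cycle case is where I expect the most care to be needed, since one must verify that the cycle genuinely supplies the extra dimension that a spanning tree alone cannot.

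Finally I would assemble the global formula. From $\rank M = \sum_i \rank M_i$ and $\dim {\cal P}_G = \rank M - 1$ we obtain $\dim {\cal P}_G = \sum_{i=1}^r(\dim {\cal P}_{G_i}+1) - 1 = r - 1 + \sum_{i=1}^r \dim {\cal P}_{G_i}$. Writing $r'$ for the number of components with no odd cycle, the componentwise values yield $\sum_i \dim {\cal P}_{G_i} = |V(G)| - 2r' - (r-r') = |V(G)| - r - r'$, and substituting gives $\dim {\cal P}_G = |V(G)| - r' - 1$, as required.
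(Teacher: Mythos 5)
Your proof is correct, and it is essentially the standard argument: the paper states this proposition as known (citing Ohsugi--Hibi and Sturmfels) without proof, and the cited proofs proceed exactly as you do, by reducing $\dim {\cal P}_G$ to the rank of the unsigned incidence matrix and using the classical fact that this rank is $|V|-1$ for a connected bipartite graph and $|V|$ for a connected graph with an odd cycle (loops counting as odd cycles). All steps, including the alternating-sum trick on an odd cycle and the block-diagonal assembly over components, check out.
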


Let $W$ be a subset of $V(G)$.  The {\em induced
subgraph} of $G$ on $W$ is the subgraph $G_W$ 
on $W$ whose edges are those edges 
$e = \{ i, j \} \in G(E)$
with $i \in W$ and $j \in W$.
If $G'$ is an induced subgraph of $G$
with the edges $e_{i_1}, \ldots, e_{i_q}$, then
we write ${\cal F}_{G'}$ for
the convex hull of 
$\{ \rho(e_{i_1}), \ldots, \rho(e_{i_q}) \}$ 
in $\RR^n$.  It follows that ${\cal F}_{G'}$
is a face of ${\cal P}_G$.
Hence, ${\cal F}_{G'}$ is simple if ${\cal P}_G$
is simple.

\begin{Lemma}
\label{edge}
Let $e$ and $f$ be edges of $G$ with $e \neq f$
and suppose that each of $\rho(e)$ and $\rho(f)$
is a vertex of ${\cal P}_G$.
Then the convex hull of
$\{ \rho(e), \rho(f) \}$ 
is an edge of ${\cal P}_G$
if and only if one of the following 
is satisfied:
\begin{enumerate}
\item[(i)]
Each of $e$ and $f$ is a loop of $G$;
\item[(ii)]
$e = \{ i, j \}$ with $i \neq j$ and
$f$ is a loop at $i$; 
\item[(iii)]
$e = \{ i, j \}$, 
$f = \{ k, k \}$ 
with $|\{i,j,k\}|=3$
such that either $\{ i, k \} \not\in E(G)$ 
or $\{ j, k \} \not\in E(G)$;   
\item[(iv)]
$e = \{ i, j \}$, 
$f = \{ j,k \}$ with 
$|\{i,j,k\}|=3$
such that either $\{ i, k \} \not\in E(G)$ 
or $\{ j, j \} \not\in E(G)$;   
\item[(v)]
$e = \{ i, j \}$ 
and
$f = \{ k, \ell \}$ with 
$|\{i,j,k,\ell\}|=4$
such that 
the induced subgraph of $G$
on $\{ i, j , k, \ell\}$
has a vertex of degree 1.
{\em (}If the induced subgraph of $G$
on $\{ i, j , k, \ell\}$
has no loop, then we can replace ``a vertex of degree 1"
with ``no even cycle of the form
$(e, e', f, f')$".{\rm )}
\end{enumerate}
\end{Lemma}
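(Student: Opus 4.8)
The plan is to reduce the statement to a computation on a graph with at most four vertices, and then to treat the two implications by elementary convex geometry. First I would pass to the induced subgraph on the support: set $W = V(e) \cup V(f)$, so that $|W| \le 4$, and let $G_W$ be the induced subgraph of $G$ on $W$. By the remark preceding the lemma, ${\cal F}_{G_W}$ is a face of ${\cal P}_G$; moreover $\rho(e)$ and $\rho(f)$ are vertices of ${\cal P}_G$ lying in this face, hence are vertices of ${\cal F}_{G_W} = {\cal P}_{G_W}$. Since a subset of a face $F$ of ${\cal P}_G$ is a face of ${\cal P}_G$ if and only if it is a face of $F$, the segment $\con\{\rho(e),\rho(f)\}$ is an edge of ${\cal P}_G$ if and only if it is an edge of ${\cal P}_{G_W}$. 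Because every condition in (i)--(v) refers only to vertices and edges inside $W$, the conditions are intrinsic to $G_W$, so I may assume $G = G_W$ and $V(G) = W$. This also explains the list: sorting by the overlap pattern of $e$ and $f$ produces exactly $|W| = 2$ (cases (i),(ii)), $|W| = 3$ (cases (iii),(iv)), and $|W| = 4$ (case (v)).

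For the direction ``the listed condition $\Rightarrow$ edge'' I would, in each case, exhibit a linear functional $c \in \RR^W$ whose maximum over ${\cal P}_G$ is attained exactly at $\rho(e)$ and $\rho(f)$. For instance, in (i) and (ii) one checks that $c = \eb_i + \eb_j$ works: $\langle c, \rho(g)\rangle = 2$ precisely for those edges $g$ with $g \subseteq \{i,j\}$, and the hypotheses, together with the fact that $\rho(g)$ fails to be a vertex exactly when $g$ is a non-loop edge both of whose endpoints carry loops, guarantee that the only such vertices are $\rho(e)$ and $\rho(f)$. In the remaining cases I would write down the analogous functional adapted to $W$; the stated non-adjacency hypotheses (e.g. $\{i,k\}\notin E(G)$ or $\{j,k\}\notin E(G)$ in (iii)) are exactly what is needed to remove every competing vertex from the supporting hyperplane. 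The resulting hyperplane section is then $\con\{\rho(e),\rho(f)\}$, an edge.

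For the converse I would argue contrapositively: if none of (i)--(v) holds, I produce vertices of ${\cal P}_G$ other than $\rho(e),\rho(f)$ and a convex combination of them equal to the midpoint $m = \tfrac12(\rho(e)+\rho(f))$. Since $m$ lies in the relative interior of $\con\{\rho(e),\rho(f)\}$, any functional attaining its maximum there would have to be maximal at each vertex occurring in the combination, forcing those vertices onto the segment, which is impossible; hence the segment is not a face. In most cases the witness is a genuine ``square'', a second pair $\{g,h\}$ of edges with $\rho(g)+\rho(h)=\rho(e)+\rho(f)$, corresponding to a nontrivial even closed walk $(e,g,f,h)$ as in Proposition \ref{evenclosedwalk} (for example $\{g,h\}=\{\{i,k\},\{j,k\}\}$ when (iii) fails, and one of the two other perfect matchings of $\{i,j,k,\ell\}$ when the no-even-cycle form of (v) fails).

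The main obstacle is case (v) in the presence of loops. There the simple quadratic witness may be unavailable, because a competing matching can involve a pair $\rho(g)$ that is not a vertex; this happens exactly when both endpoints of $g$ carry loops, a configuration that condition $(*)$ forces to occur. In that situation one must instead build a longer relation: for $e=\{i,j\}$, $f=\{k,\ell\}$ with loops at $i$ and $k$ and with the edge $\{j,\ell\}$ present, the midpoint satisfies $m = \tfrac12\rho(\{j,\ell\}) + \tfrac14\rho(\{i,i\}) + \tfrac14\rho(\{k,k\})$, a convex combination of three genuine vertices. Verifying that the combinatorial criterion ``$G_W$ has a vertex of degree $1$'' is precisely the obstruction to all such witnesses, and checking its stated equivalence with ``no even cycle $(e,e',f,f')$'' in the loopless subcase, is the delicate bookkeeping that makes case (v) the crux of the argument.
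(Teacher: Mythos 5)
Your skeleton matches the paper's: reduce to the induced subgraph $G_W$ on $W=V(e)\cup V(f)$ (the reduction is valid, since $(*)$ passes to induced subgraphs, ${\cal F}_{G_W}$ is a face of ${\cal P}_G$, and a segment inside a face is a face of ${\cal P}_G$ iff it is a face of that face), then prove the forward implication with supporting functionals and the converse with affine relations among the $\rho(g)$'s. Cases (i)--(iv) and the $4$-cycle subcase of (v) go through as you describe; note that for the quadratic witnesses you never need $\rho(e'),\rho(f')$ to be vertices, only points of ${\cal P}_G$, since a face must contain every point occurring with positive weight in a convex combination of a relative interior point of that face.

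The genuine gap is in the subcase you yourself identify as the crux: (v) with no vertex of degree $1$ and no even cycle $(e,e',f,f')$. Your proposed witness $\tfrac12\rho(\{j,\ell\})+\tfrac14\rho(\{i,i\})+\tfrac14\rho(\{k,k\})$ requires loops at both $i$ and $k$; but then $(*)$ forces $\{i,k\}\in E(G)$, which together with $\{j,\ell\}$ produces exactly the $4$-cycle $(e,\{j,\ell\},f,\{k,i\})$ you have excluded. So the configuration your cubic relation handles never occurs, and the configuration that actually arises is left untreated. Concretely: absence of both $4$-cycles means the cross edges present cannot contain a perfect matching of $\{i,j,k,\ell\}$, so some endpoint, say $i$, lies on no cross edge; degree $\geq 2$ forces a loop at $i$; then $(*)$ and the hypothesis that $\rho(e)$ is a vertex exclude loops at $j,k,\ell$, and degree $\geq 2$ at $k$ and $\ell$ forces $\{j,k\},\{j,\ell\}\in E(G)$. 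The relation needed here is $2\rho(e)+\rho(f)=\rho(\{i,i\})+\rho(\{j,k\})+\rho(\{j,\ell\})$, exhibiting $\tfrac23\rho(e)+\tfrac13\rho(f)$ as a convex combination of three points of ${\cal P}_G$ off the segment; your proposal never produces it, and your stated reason for needing a longer relation (that some $\rho(g)$ might fail to be a vertex) is not actually an obstruction. A secondary, smaller gap: in the forward direction of (v) the ``analogous functional'' is not automatic --- one must first argue that $k$ and $\ell$ cannot both carry loops before a functional such as $2x_i+x_j+2x_k+x_\ell$ cuts out a face (an edge or a triangle) containing $\rho(e)$ and $\rho(f)$.
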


\begin{proof}
Let, in general, ${\cal P}$ be a convex polytope
and ${\cal F}$ a face of ${\cal P}$.
Then each face ${\cal F}'$ of ${\cal F}$ is
a face of ${\cal P}$.  In addition, if ${\cal F}$
and ${\cal F}'$ are faces of ${\cal P}$
with ${\cal F}' \subset {\cal F}$,
then ${\cal F}'$ is a face of ${\cal F}$.

(i)  Let $e$ be a loop at $i \in V(G)$ and 
$f$ a loop at $j \in V(G)$, where $i \neq j$.
Let $G'$ be the induced subgraph of $G$
on $\{ i, j \}$.  
By the condition $(*)$, we have $E(G') = \{e,f,\{i,j \} \}$.
Hence 
the convex hull of
$\{ \rho(e), \rho(f) \}$ coincides with
the face ${\cal F}_{G'}$ of ${\cal P}_G$.
Thus the convex hull of
$\{ \rho(e), \rho(f) \}$
is an edge of ${\cal P}_G$.

(ii)  
Let $G'$ be the induced subgraph of $G$
on $\{ i, j \}$.  
Since $\rho(e)$ is a vertex of ${\cal P}_G$,
we have $\{j,j\} \notin E(G)$.
Hence $E(G') = \{ e, f \}$.
As in (i),
the convex hull of
$\{ \rho(e), \rho(f) \}$ coincides with
the face ${\cal F}_{G'}$ of ${\cal P}_G$.

(iii) 
Suppose
$\{ i, k \} \in E(G)$ and 
$\{ j, k \} \in E(G)$.
Let $G'$ be a subgraph of $G$
with $E(G') = \{e,f,\{ i, k \},\{j, k \} \}$.
Then
the edge polytope ${\cal P}_{G'}$ 
is a rectangle and
the convex hull of
$\{ \rho(e), \rho(f) \}$
is its diagonal.
Thus the convex hull of
$\{ \rho(e), \rho(f) \}$ 
cannot be an edge of ${\cal P}_{G'}$.
Since ${\cal P}_{G'}$ is a subpolytope of ${\cal P}_{G}$,
$\{ \rho(e), \rho(f) \}$ 
cannot be an edge of ${\cal P}_{G}$.

On the other hand, 
suppose $\{ i, k \} \not\in E(G)$.
Let $G''$ be the induced subgraph of $G$
on $\{ i, j, k \}$.
By the condition $(*)$,
$E(G'')$ is one of the following:
$
\{e,f \}$, 
$\{e,f,\{j,k\} \}$ and 
$
\{e,f,\{j,k\},\{j,j\} \}.
$
In all of three cases above,
the face ${\cal F}_{G''}$ 
is a simplex.
Thus
the convex hull of
$\{ \rho(e), \rho(f) \}$ 
is an edge of ${\cal P}_G$. 

(iv)  
If both $e' = \{ i, k \}$ and $f'= \{ j, j \}$ belongs to $E(G)$, 
then we have
$\rho(e) + \rho(f)
= \rho(e') + \rho(f')$.
Thus the convex hull of
$\{ \rho(e), \rho(f) \}$ 
cannot be an edge of ${\cal P}_G$.

On the other hand, suppose that 
either $\{ i, k \} \not\in E(G)$ 
or $\{ j, j \} \notin E(G)$.
Let $G'$ be the induced subgraph of $G$
on $\{ i, j,k\}$.
If $\{ j, j \} \in E(G)$, then $\{ i, k \} \not\in E(G)$ and
$G$ has a loop at neither $i$ nor $k$.
Hence $E(G') = \{ e,f,\{j,j\}  \}$.
Thus the face ${\cal F}_{G'}$
is a triangle and 
the convex hull of
$\{ \rho(e), \rho(f) \}$ 
is an edge of ${\cal P}_G$. 
If $\{ j, j \} \notin E(G)$, then
the face of ${\cal P}_{G}$
arising from the supporting hyperplane 
${\cal H} =
\{ (x_1, \ldots, x_n) \in \RR^n \ | \ 
x_i + 2 x_j + x_k = 3 \}$
is
the convex hull of
$\{ \rho(e), \rho(f) \}$.

(v)
Let $G'$ be the induced subgraph of $G$
on $\{ i, j ,k, \ell\}$.
Suppose that $G'$ has no vertex of degree 1.
If $G'$ has an even cycle of the form
$(e, e', f, f')$,
then $\rho(e) + \rho(f)
= \rho(e') + \rho(f')$.
Thus the convex hull of
$\{ \rho(e), \rho(f) \}$ 
cannot be an edge of ${\cal P}_G$.
If $G'$ has no even cycle of the form
$(e, e', f, f')$, then we may assume that
$\{i,k\} \notin E(G)$ and $\{i,\ell\} \notin E(G)$.
Since the degree of $i$ is not 1,
we have $\{i,i\} \in E(G)$.
Hence $G$ has a loop at none of $j$, $k$ and $\ell$.
Since the degree of each of $k$ and $\ell$ is not 1,
we have $\{j,k\} \in E(G)$ and $\{j,\ell\} \in E(G)$.
Thus $E(G')=\{ e,f, \{i,i\}, \{j,k\}, \{j,\ell\}\}$.
Then we have
$
2 \rho(e) + \rho(f)
= \rho(\{i,i\}) + \rho(\{j,k\})+ \rho(\{j,\ell\})
$
and hence 
the convex hull of
$\{ \rho(e), \rho(f) \}$ 
cannot be an edge of ${\cal P}_G$.

Suppose that the degree of a vertex $i$ of $G'$ is 1.
Then, none of $\{i,i\}$,
$\{ i, k \}$
and $\{ i, \ell \}$ belongs to $E(G)$.
Since $G$ has no loop at either $k$
or $\ell$, we may assume that
$G$ has no loop at $k$.
Then the hyperplane 
$
{\cal H}' =
\{ (x_1, \ldots, x_n) \in \RR^n \ | \ 
2 x_i + x_j + 2 x_k + x_{\ell} = 3 \}
$
is a supporting hyperplane of ${\cal P}_G$
and the face ${\cal H}' \cap {\cal P}_G$
is either an edge or a triangle.
Since both $\rho(e)$ and $\rho(f)$ belongs to
${\cal H}' \cap {\cal P}_G$,
the convex hull of
$\{ \rho(e), \rho(f) \}$
is an edge of ${\cal P}_G$.
\end{proof}

\begin{Lemma}
\label{newlemma2}
Suppose that ${\cal P}_G$ is simple, but not a simplex.
Then $G$ is a connected graph
having no vertex of degree $1$.
Moreover if an induced subgraph $G'$ of $G$ has no isolated vertex
and ${\cal F}_{G'}$ is not a simplex,
then $G'$ is a connected graph
having no vertex of degree $1$.
\end{Lemma}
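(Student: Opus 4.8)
The plan is to treat the two assertions by independent arguments, both driven by the basic principle that in a simple $d$-polytope every vertex lies on exactly $d$ edges, whereas in an arbitrary polytope every vertex lies on at least $\dim$ edges and every polytope has at least $\dim+1$ vertices. Both claims will be proved by contradiction, producing a simplex whenever the bad configuration occurs.

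First I would rule out a vertex of degree $1$. Suppose $i\in V(G)$ has degree $1$; then its unique incident edge is a non-loop $e=\{i,j\}$ with $i\neq j$, and $G$ has no loop at $i$, so $\rho(e)$ is a vertex of $\mathcal{P}_G$. I claim that $\rho(e)$ is joined by an edge of $\mathcal{P}_G$ to \emph{every} other vertex $\rho(f)$, which I verify by running through the cases of Lemma \ref{edge}. The key simplification is that $\deg_G(i)=1$ forces $\{i,k\}\notin E(G)$ for all $k$ and $\{i,i\}\notin E(G)$, so every ``missing-edge'' hypothesis in Lemma \ref{edge} is automatic: a loop $f$ at $j$ falls under (ii) and a loop at any other vertex under (iii); a non-loop $f$ meeting $e$ can only meet it at $j$ and falls under (iv); and a non-loop $f$ disjoint from $e$ falls under (v), since $i$ has degree $1$ in the induced subgraph on the four vertices involved. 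Hence $\rho(e)$ is adjacent to all of the remaining vertices, so it lies on exactly $N-1$ edges, where $N$ is the number of vertices of $\mathcal{P}_G$. Simplicity forces $N-1=\dim\mathcal{P}_G$, whence $N=\dim\mathcal{P}_G+1$ and $\mathcal{P}_G$ is a simplex, contradicting the hypothesis.

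For connectedness, suppose $G$ has components $G_1,\dots,G_r$ with $r\geq 2$. Since distinct components involve disjoint coordinates while every point of $\mathcal{P}_G$ lies on the hyperplane $\sum_v x_v=2$, the affine hulls of the $\mathcal{P}_{G_i}$ are in skew position (consistent with the dimension formula $\dim\mathcal{P}_G=r-1+\sum_i\dim\mathcal{P}_{G_i}$), so $\mathcal{P}_G$ is the iterated join $\mathcal{P}_{G_1}\ast\cdots\ast\mathcal{P}_{G_r}$ of nonempty polytopes, each nonempty because $G$ has no isolated vertex. In a join $P\ast Q$ a vertex $v$ of $P$ is adjacent to its $P$-neighbours together with every vertex of $Q$, so $\deg_{P\ast Q}(v)=\deg_P(v)+|V(Q)|$. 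If $P\ast Q$ is simple, then comparing with $\dim(P\ast Q)=\dim P+\dim Q+1$ and using $\deg_P(v)\geq\dim P$ together with $|V(Q)|\geq\dim Q+1$ forces $|V(Q)|=\dim Q+1$ and $\deg_P(v)=\dim P$ for every vertex $v$ of $P$; by symmetry $|V(P)|=\dim P+1$ as well, so both factors are simplices and $P\ast Q$ is a simplex. Iterating, $\mathcal{P}_G$ would be a simplex, again a contradiction, so $G$ is connected.

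Finally, the ``moreover'' clause reduces to the first part applied to $G'$ in place of $G$. An induced subgraph inherits condition $(*)$, and $\mathcal{F}_{G'}$ is a face of the simple polytope $\mathcal{P}_G$, hence simple, and it coincides with the edge polytope $\mathcal{P}_{G'}$; by hypothesis it is not a simplex and $G'$ has no isolated vertex, so $G'$ satisfies exactly the standing hypotheses used above and is therefore connected with no vertex of degree $1$. The steps I expect to require the most care are the exhaustive case-check in the degree-$1$ argument, chiefly the bookkeeping of loops and of when a given $\rho(f)$ is actually a vertex, and the verification that a disconnected edge polytope is genuinely a join, so that the adjacency count $\deg_{P\ast Q}(v)=\deg_P(v)+|V(Q)|$ is legitimately available; the remaining inequalities are routine.
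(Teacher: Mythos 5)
Your proof is correct. The degree-$1$ argument and the reduction of the ``moreover'' clause to the main statement via the face $\mathcal{F}_{G'}=\mathcal{P}_{G'}$ are essentially identical to the paper's: the paper likewise invokes Lemma \ref{edge} to see that $\rho(e)$ is joined to every other vertex and concludes that simplicity would force $\mathcal{P}_G$ to be a simplex. The one place you genuinely diverge is connectedness. The paper picks a component $G'$ whose polytope is not a simplex, adjoins a single edge $e$ from another component, and counts edges at $\rho(e)$ inside the induced face on $V(G')\cup e$, again via Lemma \ref{edge}; you instead observe that $\mathcal{P}_G$ is the join $\mathcal{P}_{G_1}\ast\cdots\ast\mathcal{P}_{G_r}$ (justified by the dimension formula in Proposition 1.3) and prove the cleaner general fact that a simple join of polytopes must be a simplex. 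Your route buys a statement independent of the graph-theoretic case analysis and sidesteps the bookkeeping about which $\rho(f)$ are vertices and which cases of Lemma \ref{edge} apply across components; the paper's route stays entirely inside its own combinatorial toolkit and needs only one vertex count rather than the two-sided inequality argument. Both are sound, and your identification of the delicate points (the loop bookkeeping in the degree-$1$ case, and the legitimacy of the join decomposition) is exactly where the care is needed.
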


\begin{proof}
Suppose that $G$ is not connected.
Since ${\cal P}_G$ is not a simplex,
there exists a connected component $G'$ of $G$ such that 
${\cal P}_{G'}$ is not a simplex.
Then $G'$ has at least $\dim {\cal P}_{G'} +2$ edges.
Let $e = \{i,j\} \in E(G) \setminus E(G')$.
Let $G''$ be the induced subgraph of $G$ on the vertex set
$V(G') \cup \{i,j\}$.
Thanks to Lemma \ref{edge}, the convex hull of 
$\{ \rho(e) , \rho(f) \}$ is an edge of ${\cal P}_{G'}$
for all $f \in E(G')$.
Since $\dim {\cal P}_{G''} = \dim {\cal P}_{G'} + 1$,
the face ${\cal P}_{G''}$ of ${\cal P}_G$ is not simple.
Thus ${\cal P}_G$ is not simple.
Suppose that $G$ has a vertex $i$ of degree $1$.
Let $e = \{ i,j \} \in E(G)$. 
Thanks to Lemma \ref{edge},
the convex hull of 
$\{ \rho(e) , \rho(f) \}$ is an edge of ${\cal P}_{G}$
for all $(e \neq) \ f \in E(G)$.
Since ${\cal P}_G$ is not a simplex,
this contradicts that ${\cal P}_G$ is simple.
Moreover, the induced subgraph $G'$ satisfies the same condition since 
the face ${\cal F}_{G'}$ is also simple.
\end{proof}

\begin{Lemma}
\label{newlemma}
Work with the same notation as above.
%
If ${\cal P}_G$ is simple, but not a simplex,
then $\widetilde{G}$ is a connected graph having no vertex of degree $1$.
\end{Lemma}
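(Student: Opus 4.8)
The plan is to reduce everything to the graph $\widetilde{G}$, using at the outset the fact that the vertices of ${\cal P}_G = {\cal P}_{\widetilde{G}}$ are \emph{exactly} the points $\rho(e)$ with $e \in E(\widetilde{G})$: by the remark in the Introduction, $\rho(e)$ fails to be a vertex precisely when $e = \{i,j\}$ is a non-loop both of whose endpoints carry a loop, and these are exactly the edges deleted in passing from $G$ to $\widetilde{G}$. I emphasize that $\widetilde{G}$ need not satisfy the standing hypothesis $(*)$, so Lemma \ref{newlemma2} cannot be quoted verbatim for $\widetilde{G}$; this is the real source of difficulty and dictates the choice of argument below.

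For connectedness, I would suppose $\widetilde{G}$ is disconnected and split its vertex set as $V(\widetilde{G}_1) \sqcup W$, where $\widetilde{G}_1$ is one connected component and $W$ collects the rest. Since the two vertex blocks are disjoint and every $\rho(e)$ lies on the hyperplane $\{\sum_i x_i = 2\}$, the functional $\sum_{v \in V(\widetilde{G}_1)} x_v$ (and its negative) exhibits ${\cal P}_{\widetilde{G}}$ as the \emph{join} of its two sub-edge-polytopes $A := {\cal P}_{\widetilde{G}_1}$ and $B$; each of $A$ and $B$ is a face of ${\cal P}_{\widetilde{G}}$ and hence simple. In a join every vertex of $A$ is joined by an edge to every vertex of $B$, so a vertex of $A$ has exactly $\dim A + |V(B)|$ neighbours, while simplicity forces this to equal $\dim A + \dim B + 1$. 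Thus $|V(B)| = \dim B + 1$, i.e. $B$ is a simplex, and symmetrically $A$ is a simplex; but then the join ${\cal P}_{\widetilde{G}}$ is itself a simplex, contradicting the hypothesis. Hence $\widetilde{G}$ is connected.

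For the degree condition I would suppose $v$ is incident to a single edge of $\widetilde{G}$. If that edge is a non-loop $\{v,w\}$, then $v$ carries no loop (otherwise its loop would be a second edge at $v$ in $\widetilde{G}$), so no edge at $v$ was deleted and $v$ already had degree $1$ in $G$, contradicting Lemma \ref{newlemma2}. Otherwise the unique edge is the loop $\{v,v\}$, so every non-loop $G$-edge at $v$ was deleted; that is, $v$ is looped and every neighbour of $v$ in $G$ carries a loop. I then claim $\rho(\{v,v\}) = 2\eb_v$ is joined by an edge of ${\cal P}_G$ to every other vertex: to each loop by Lemma \ref{edge}(i), and to each non-loop vertex $\rho(\{k,\ell\})$, which necessarily has $v \notin \{k,\ell\}$, by Lemma \ref{edge}(iii) — for $v$ cannot be adjacent to both $k$ and $\ell$, since otherwise both $k$ and $\ell$ would be looped and $\{k,\ell\}$ would have been deleted from $\widetilde{G}$. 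A vertex of a simple $d$-polytope adjacent to all others forces the total number of vertices to be $d+1$, so ${\cal P}_G$ would be a simplex, again a contradiction. Therefore $\widetilde{G}$ has no vertex of degree $1$.

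The main obstacle is the connectedness step. Because $(*)$ fails for $\widetilde{G}$, Lemma \ref{newlemma2} does not apply directly, and the parallel argument used there — picking an edge $e$ in another component and invoking Lemma \ref{edge}(v) — is clouded by the deleted cross-edges between looped vertices, which can reappear in the induced subgraph of $G$ on the relevant four vertices. Recasting the disconnected case as a join of faces sidesteps these deleted edges entirely and reduces simplicity to the single edge-count above, which is immediate.
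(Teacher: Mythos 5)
Your proof is correct, but it is a genuinely different (and considerably more careful) argument than the paper's. The paper disposes of this lemma in one sentence: ``it follows by the same argument as in the proof of Lemma \ref{newlemma2} since ${\cal P}_G = {\cal P}_{\widetilde{G}}$,'' i.e.\ it simply re-runs the disconnectedness and degree-$1$ arguments with $\widetilde{G}$ in place of $G$. You correctly identify why that is not entirely innocent: $\widetilde{G}$ need not satisfy $(*)$, and the hypotheses of Lemma \ref{edge} are phrased in terms of induced subgraphs of $G$, into which the deleted edges (non-loops joining two looped vertices) can reappear. Your replacement for the connectedness step --- writing ${\cal P}_{\widetilde{G}}$ as the join of the edge polytopes of a component and of its complement, noting that in a join every vertex of one factor is adjacent to every vertex of the other, and concluding from simplicity that both factors and hence the join are simplices --- sidesteps all case analysis on deleted edges and is clean and complete. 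Your degree-$1$ step is closer in spirit to the paper's: the non-loop case reduces directly to Lemma \ref{newlemma2} for $G$, and the loop case is an explicit adjacency count via Lemma \ref{edge}(i) and (iii), where you correctly verify that the hypothesis of (iii) holds because a common neighbour of both endpoints of a surviving non-loop edge cannot exist. What the paper's approach buys is brevity; what yours buys is an argument that visibly does not depend on $(*)$ holding for $\widetilde{G}$, which is the actual point of separating this lemma from Lemma \ref{newlemma2}.
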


\begin{proof}
It follows by the same argument as in Proof of Lemma \ref{newlemma2}
since ${\cal P}_G = {\cal P}_{\widetilde{G}}$.
\end{proof}

A finite graph $G$ on $V(G)$ is called {\em bipartite}
if there is a decomposition
$V(G) = V_1 \cup V_2$ such that
every edge of $G$ is of the form $\{ i, j \}$
with $i \in V_1$ and $j \in V_2$.

\begin{Lemma}
\label{bipartite}
Let $W$ be the set of those vertices 
$i \in V(G)$
such that $G$ has no loop at $i$ and
$G'$ the induced subgraph of $G$ on $W$.
Suppose that 
the face 
${\cal F}_{G'}$ is not a simplex.  
Then ${\cal P}_G$ is simple
if and only if 
the following conditions are satisfied:
\begin{enumerate}
\item[(i)]
$G'$ is a complete bipartite graph
with at least one cycle of length $4$;
\item[(ii)]
If $G$ has a loop at $i$, then
$\{ i, j \} \in E(G)$ 
for all $j \in W$;
\item[(iii)]
$G$ has at most one loop.
\end{enumerate}
\end{Lemma}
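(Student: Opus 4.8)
The plan is to treat the two implications separately, in each case reducing everything to counting, for every edge $e$ of the relevant graph, the number of edges of the polytope through the vertex $\rho(e)$ and comparing it with the dimension, via the criterion that a polytope is simple exactly when every vertex lies on precisely $\dim$ edges. First I would record the loop-free specialization of Lemma \ref{edge}: when $G'$ has no loop, two distinct edges $e,f$ of $G'$ satisfy that $\con\{\rho(e),\rho(f)\}$ is an edge of $\mathcal{P}_{G'}$ if and only if either $e$ and $f$ share a vertex (case (iv), whose condition holds automatically since the shared vertex carries no loop), or $e$ and $f$ are disjoint and are not opposite edges of a $4$-cycle of $G'$ (the loop-free form of (v)). Thus the number of edges through $\rho(\{i,j\})$ equals $(\deg_{G'} i-1)+(\deg_{G'} j-1)+D_{\{i,j\}}$, where $D_{\{i,j\}}$ counts the qualifying disjoint edges. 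Since $\mathcal{F}_{G'}$ is a face of $\mathcal{P}_G$, it is simple whenever $\mathcal{P}_G$ is, and as it is assumed not to be a simplex, Lemma \ref{newlemma2} gives that $G'$ is connected with no vertex of degree $1$.

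For the implication $(\Leftarrow)$ I would argue directly. If $G$ has no loop then $W=V(G)$ and $G=G'=K_{m,n}$ with $m,n\ge2$, whose edge polytope is the product $\Delta_{m-1}\times\Delta_{n-1}$ of simplices and hence simple. If $G$ has a single loop, at $v$, then by (ii) $v$ is joined to every vertex of $W$, so $G$ is determined, $\dim\mathcal{P}_G=|V(G)|-1=m+n$, and it remains to check that each of the three kinds of vertex---the loop vertex $\rho(\{v,v\})=2\eb_v$, the spokes $\rho(\{v,w\})$ with $w\in W$, and the cross vertices $\rho(\{i,j\})$ with $i,j\in W$---lies on exactly $m+n$ edges. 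This is a finite bookkeeping exercise in Lemma \ref{edge}: for $2\eb_v$ only case (ii) contributes (case (iii) never applies because $v$ sees all of $W$), giving the $m+n$ spokes; for a spoke and for a cross vertex one combines the shared-vertex edges with the vanishing disjoint contribution to reach $m+n$ in each case.

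For $(\Rightarrow)$, the heart of the statement is (i), that $G'$ is complete bipartite with a $4$-cycle. Using the degree count above, simplicity of $\mathcal{F}_{G'}$ forces $(\deg_{G'} i-1)+(\deg_{G'} j-1)+D_{\{i,j\}}=\dim\mathcal{F}_{G'}$ for every edge $\{i,j\}$, where $\dim\mathcal{F}_{G'}=p-1$ if $G'$ has an odd cycle and $p-2$ if $G'$ is bipartite ($p=|V(G')|$), by the dimension formula preceding this lemma. I would first rule out odd cycles: a shortest odd cycle is induced, and---being joined through connectivity to a vertex outside it---produces, via its disjoint cycle edges, a vertex $\rho(e)$ whose edge count overshoots $p-1$. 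Granting bipartiteness with parts $A,B$, I would then show completeness: if some $a\in A$, $b\in B$ are non-adjacent, the edges $\{a,b'\},\{a',b\}$ are disjoint and not opposite in a $4$-cycle, hence give extra edges of the polytope, and one checks these make $D_e$ incompatible with the required identity $\deg_{G'} i+\deg_{G'} j+D_e=p$ at every vertex; thus $G'=K_{m,n}$, with $m,n\ge2$ since otherwise $G'$ is a star and $\mathcal{F}_{G'}$ a simplex. The hard part will be precisely this classification step, i.e.\ pinning down the disjoint-edge count $D_e$ tightly enough to exclude odd cycles and missing cross edges; I expect this to need a careful case-sensitive analysis rather than the single inequality that settles the remaining conditions.

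Finally I would deduce (ii) and (iii). For (ii), I would compute the edges through $2\eb_v$ using Lemma \ref{edge}(i)--(iii): it is joined to every other loop vertex, to $\rho(\{v,w\})$ for each non-loop neighbour $w$ of $v$, and to a disjoint $\rho(\{k,\ell\})$ exactly when $\{k,v\}\notin E(G)$ or $\{\ell,v\}\notin E(G)$. By condition $(*)$ every loop vertex is adjacent to $v$, so the non-neighbours of $v$ form a set $T\subseteq W$; equating the total to $\dim\mathcal{P}_G=|V(G)|-1$ yields that the number of edges of $G$ meeting $T$ equals $|T|$, which connectivity and minimum degree $\ge2$ make impossible unless $T=\varnothing$, giving (ii). For (iii), with $G'=K_{m,n}$ and (ii) in hand, a cross vertex $\rho(\{i,j\})$ already lies on its $\deg_G i+\deg_G j-2=(m+\lambda)+(n+\lambda)-2$ shared-vertex edges, where $\lambda$ is the number of loops; since this cannot exceed $\dim\mathcal{P}_G=m+n+\lambda-1$, one gets $\lambda\le1$, which is (iii).
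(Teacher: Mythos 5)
Your overall criterion (count the polytope edges through each $\rho(e)$ via Lemma \ref{edge} and compare with the dimension) is the same one the paper uses, and your treatment of the ``if'' direction and of conditions (ii) and (iii) is essentially sound. The genuine gap is in the proof of (i), which is the heart of the lemma and which you explicitly defer as ``the hard part.'' Your identity $(\deg_{G'}i-1)+(\deg_{G'}j-1)+D_{\{i,j\}}=\dim\mathcal{F}_{G'}$ is a correct necessary condition, but extracting ``complete bipartite'' from it requires controlling the disjoint-edge count $D_e$ for an arbitrary connected graph of minimum degree $2$, and that is precisely where all the work lies. Your one-line sketches do not do this: for instance, the claim that a shortest odd cycle ``produces, via its disjoint cycle edges, a vertex $\rho(e)$ whose edge count overshoots'' is not how the overshoot actually arises in the simplest cases (for a triangle on $\{1,2,3\}$ with a fourth vertex joined to $1$ and $2$, the vertex $\rho(\{1,2\})$ already lies on four polytope edges, all coming from \emph{shared-vertex} pairs, while the disjoint pair $\{\rho(\{1,3\}),\rho(\{2,4\})\}$ is killed by a $4$-cycle); and in the completeness step the extra disjoint edges you exhibit must be weighed against the possibility that other disjoint pairs are simultaneously destroyed by $4$-cycles, so ``incompatible with the required identity'' is an assertion, not an argument. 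The paper avoids the global count altogether: it first shows a $4$-cycle $C$ exists, proves that every $4$-cycle of $G'$ is an induced subgraph, takes a maximal complete bipartite subgraph $\Gamma\supset C$, and rules out an edge of $G'$ leaving $\Gamma$ by counting edges through a single vertex of a $4$- or $5$-vertex induced face of dimension at most $4$. Some such localization (or a genuinely complete global case analysis) is needed; as written, statement (i) is not proved.

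A secondary point: you invoke Lemma \ref{newlemma2} to conclude that $G'$ is connected with minimum degree $2$, but that lemma requires $G'$ to have no isolated vertex, which is not automatic here ($W$ may contain vertices all of whose $G$-neighbours carry loops). The paper is careful about this: it first proves that the edge sets agree, $E(\Gamma)=E(G')$, and only after establishing (ii) and (iii) concludes that $G'$ has no isolated vertex, i.e.\ that $\Gamma=G'$ as graphs on $W$. Your argument should either restrict to the non-isolated part of $G'$ and return to the possible isolated vertices at the end, or address them directly.
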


\begin{proof}
{\bf (``Only if'')}
Suppose that ${\cal P}_G$ is simple.

(i) Since ${\cal F}_{G'}$ is a face of ${\cal P}_G$,
it follows that ${\cal F}_{G'}$ is simple.
Lemma \ref{edge} (iv) and (v) say that,
if $G'$ has no cycle of length $4$,
then the convex hull of 
$\{ \rho(e), \rho(f) \}$
is an edge of ${\cal F}_{G'}$
for $e, f \in V(G')$ with $e \neq f$.
Since ${\cal F}_{G'}$ is simple,
if $G'$ has no cycle of length $4$,
then ${\cal F}_{G'}$ is a simplex.
Since ${\cal F}_{G'}$ is not a simplex,
it follows that $G'$ has a cycle  
$C$ of length $4$.  Let $G''$ denote 
the induced subgraph of $G$
on $V(C)$.  Since ${\cal F}_{G''}$ is simple,
by using Lemma \ref{edge} (iv), 
it follows easily that $G'' = C$.
In other words, 

\medskip

$(\sharp)$ every cycle $C'$ of length $4$ 
of $G'$ coincides with the induced subgraph on $V(C')$.

\medskip


Since $C$ is a complete bipartite graph,
there exists a maximal (with respect to inclusion)
complete bipartite subgraph $\Gamma$ of $G'$ with
$C \subset \Gamma$.  
Thanks to $(\sharp)$, $\Gamma$ is the induced subgraph of $G'$ on $V(\Gamma)$.
Let $V(\Gamma) = V_1 \cup V_2$ be the partition
of $V(\Gamma)$. 
Suppose that 
there is an edge $\{ i, j \}$ of $G'$
with $\{ i, j \} \not\in E(\Gamma)$.
If $i \notin V(\Gamma)$ and $j \notin V(\Gamma)$,
then by applying Lemma \ref{newlemma2} to the induced subgraph of $G'$ on 
$V(C) \cup \{i,j\}$, there exists an edge $\{k,\ell\} \in E(G') \setminus E(\Gamma)$ 
with $k \in \{i,j\}$ and $\ell \in V(\Gamma)$.
Thus we may assume that one of
$i \in V(\Gamma)$ and $j \in V(\Gamma)$ holds.
Say $i \in V_1$ and $j \not\in V(\Gamma)$.
Let $i' \in V_1$ with $i' \neq i$
and $C'$ a cycle
of length $4$ in $\Gamma$ with $i, i' \in V(C')$,
say $V(C') = \{ i, i', k, k' \}$
with $k, k' \in V_2$ and $k \neq k'$. 
Let $\Gamma'$ denote the induced subgraph
of $G$ on $\{ i, i', k, k', j \}$.
Applying Lemma \ref{newlemma2} to $\Gamma'$,
the degree of $j$ is not 1 in $\Gamma'$.


If $\{i',j\} \in E(\Gamma')$, then
%
thanks to $(\sharp)$,
$E(\Gamma') = E(C') \cup \{ \{i,j\}, \{i',j\} \}$,
i.e.,
$\Gamma'$ is a complete bipartite graph.
If
$\{i',j\} \notin E(\Gamma')$ and $\{j,k\} \in E(\Gamma')$, then
by Lemma \ref{edge} (iv) and (v), 
the convex hull of $\{\rho(\{i,j\}), \rho(e) \}$ is an edge of ${\cal F}_{\Gamma'}$
for all $e \in E(C') \cup \{ \{j,k\}  \}$.
Hence $\rho(\{i,j\})$ belongs to at least 5 edges of ${\cal F}_{\Gamma'}$.
Since $\dim {\cal F}_{\Gamma'} = 4$, ${\cal F}_{\Gamma'}$ is not simple 
and this is a contradiction.

Thus the induced subgraph $\Gamma''$ of $G$
on $V(\Gamma) \cup \{ j \}$ is a complete
bipartite graph with $\{ i, j \} \in E(\Gamma'')$.
This contradicts the maximality of $\Gamma$.  
Hence $E(\Gamma) = E(G')$.
After the proof of (iii),
we will prove that $V(\Gamma) = V(G') \ (= W)$, that is,
there exists no isolated vertex in $G'$.

(ii) 
We will prove that if $G$ has a loop $e$ at $i$, then
$\{ i, j \} \in E(G)$ 
for all $j \in V(\Gamma)$.
Since $\Gamma$ is a complete bipartite graph
with at least one cycle of length $4$,
it is enough to show that,
for any cycle $C'$ of length $4$ in $\Gamma$,
$\{ i, j \} \in E(G)$ 
for all $j \in V(C')$.
Let $G''$ denote the induced subgraph of $G$
on $V(C') \cup \{ i \}$.
Then ${\cal F}_{G''}$ is simple.

Let $E(C') = \{ \{j,k\},\{j,k'\},\{j',k\},\{j',k'\} \}$.
Let $a$ denote the number of vertices $\ell \in \{ j,j' \}$
with $\{ \ell, i \} \in E(G)$ and
$b$ the number of vertices $\ell' \in \{k,k'\}$
with $\{ \ell', i \} \in E(G)$.
Then by using Lemma \ref{edge} 
the number of edges of ${\cal F}_{G''}$
to which $\rho(e)$ belongs 
is 
$
a + b + 4 - ab 
$.
Since $\dim {\cal F}_{G''} \leq 4 $ and ${\cal F}_{G''}$ is simple,
we have $a + b + 4 - ab \leq 4$, that is, $1 \leq (a-1)(b-1)$.
Since $0 \leq a,b \leq 2$, it follows that either $a= b = 0 $
or $a = b = 2$.
If $a = b = 0$, then $\dim  {\cal F}_{G''}  = 3$ and 
$a + b + 4 - ab = 4 > 3$.
Hence the face ${\cal F}_{G''}$ cannot be simple.
Thus $a = b = 2$.

(iii) Suppose that $G$ has more than 
one loop.
Let $\{i,i\}, \{i',i'\} \in E(G)$ with $i \neq i'$
and let $C'$ an even cycle of length 4 in $\Gamma$.
Let $G'''$ denote the induced subgraph of $G$ on
$\{i ,i'\} \cup V(C')$.
By using (ii) which we already proved, 
$
E(G''') = \{ \{i,i\}, \{i',i'\}, \{i ,i'\}  \} \cup E(C') \cup 
\{ \{ j, k\} \ | \ j \in \{i,i'\}, k \in V(C')\}
.$
Then Lemma \ref{edge} (iv) and (v) guarantee that
for each $e \in E(C')$,
the vertex $\rho(e)$ belongs to 
6 edges of ${\cal F}_{G'''}$.
Since $\dim {\cal F}_{G'''} = 5$,
the face ${\cal F}_{G'''}$ cannot be simple.
This is a contradiction and hence $G$ has at most one loop.

Finally, since
$G$ has at most one loop and has no vertex of degree 1,
$G'$ has no isolated vertex.
Thus $\Gamma = G'$, as desired.

\smallskip
\noindent
{\bf (``If'')}
Suppose that the conditions (i), (ii) and (iii)
are satisfied.  

First, if $G$ has no loop, then $G$ is a complete
bipartite graph on $W = \{ 1, \ldots, d \}$
with at least one cycle of length $4$.
Let $W = W_1 \cup W_2$ be the decomposition of $W$ 
such that each edge of $G$ is of the form
$\{ i, j \}$ with $i \in W_1$ and $j \in W_2$,
where $|W_1| \geq 2$ and $|W_2| \geq 2$.
Fix an edge $e = \{ i_0, j_0 \}$ of $G$.
Let $f = \{ i, j \} \in E(G)$ with $e \neq f$.
Lemma \ref{edge} (iv) and (v) say that
the convex hull of $\{ \rho(e), \rho(f) \}$
is an edge of ${\cal P}_G$ if and only if
either $i = i_0$ or $j = j_0$.  Hence
each vertex of ${\cal P}_G$ belongs to
$d - 2$ edges of ${\cal P}_G$.  
Since ${\cal P}_G$ is of dimension $d - 2$,
it follows that ${\cal P}_G$ is simple.

Second, suppose that $G$ has exactly one loop.
Let $e$ be a loop of $G$ at $d$.
Then each edge $\{ i, d \}$ with $1 \leq i < d$
belongs to $G$. 
 
Fix an edge $e_0 = \{ i_0, j_0 \}$ of $G'$.
Let $f = \{ i, d \}$ with $1 \leq i < d$.
Then by using Lemma \ref{edge} (iv) and (v)
the convex hull of $\{ \rho(e_0), \rho(f) \}$
is an edge of ${\cal P}_G$ if and only if
either $i = i_0$ or $i = j_0$. 
In addition, by using Lemma \ref{edge} (iii)
the convex hull of $\{ \rho(e_0), \rho(e) \}$ 
cannot be an edge of ${\cal P}_G$.
Hence $\rho(e_0)$ belongs to $(d - 3) + 2 = d-1$ edges
of ${\cal P}_G$.

Fix $f_0 = \{ i_0, d \} \in E(G)$ 
with $i_0 \in W_1$.
Let $f \in E(G)$.
Then the convex hull of $\{ \rho(f_0), \rho(f) \}$
is an edge of ${\cal P}_G$ if and only if
$f = \{ i_0, j \}$ with $j \in W_2$, 
or
$f = \{ i, d \}$ with $i_0 \neq i \in W_1$, or
$f = e$. 
Hence $\rho(f_0)$ belongs to $(d - 2) + 1 = d-1$ edges
of ${\cal P}_G$.  

Let $f \in E(G) \setminus \{ e \}$.
Then the convex hull of $\{ \rho(e), \rho(f) \}$
is an edge of ${\cal P}_G$ if and only if
$f = \{ i, d \}$ 
with $1 \leq i < d$.
Hence $\rho(e)$ belongs to $d - 1$ edges
of ${\cal P}_G$.  

Since the dimension of ${\cal P}_G$ is
$d - 1$, it follows that ${\cal P}_G$ is simple,
as desired.
\end{proof}

\begin{Theorem}
\label{bunrui}
Let $W$ denote the set of those vertices 
$i \in V(G)$
such that $G$ has no loop at $i$ and
$G'$ the induced subgraph of $G$ on $W$.
Then the following conditions are equivalent {\rm :}
\begin{itemize}
\item[(i)]
${\cal P}_G$ is simple, but not a simplex {\rm ;}
\item[(ii)]
${\cal P}_G$ is smooth, but not a simplex {\rm ;}
\item[(iii)]
$W \neq \emptyset$ and
$G$ is one of the following graph {\rm :}
\begin{itemize}
\item[($\alpha $)]
$G$ is a complete bipartite graph with at least one cycle
of length $4${\rm ;}
\item[($\beta $)]
$G$ has exactly one loop,
$G'$ is a complete bipartite graph and
if $G$ has a loop at $i$, then
$\{ i, j \} \in E(G)$ 
for all $j \in W$
{\rm ;}
\item[($\gamma $)]
$G$ has at least two loops,
$G'$ has no edge and
if $G$ has a loop at $i$, then
$\{ i, j \} \in E(G)$ 
for all $j \in W$.
\end{itemize}
\end{itemize}
\end{Theorem}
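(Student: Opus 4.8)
The plan is to prove the cycle of implications $(ii) \Rightarrow (i) \Rightarrow (iii) \Rightarrow (ii)$. The first implication is immediate: a smooth polytope is simple by definition, so $(ii)$ forces $(i)$.

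For $(i) \Rightarrow (iii)$ I would start from Lemma \ref{newlemma}, which tells me that $\widetilde{G}$ is connected and has no vertex of degree $1$; since $\mathcal{P}_G = \mathcal{P}_{\widetilde{G}}$ and the loops of $G$ survive in $\widetilde{G}$, the graph $G$ is itself connected. I then split on whether the face $\mathcal{F}_{G'}$ is a simplex. When $\mathcal{F}_{G'}$ is \emph{not} a simplex, Lemma \ref{bipartite} applies directly and delivers that $G'$ is complete bipartite with a $4$-cycle, that every loop vertex is joined to all of $W$, and that $G$ has at most one loop; reading this off for zero or one loop gives case $(\alpha)$ or case $(\beta)$ respectively, and $W \neq \emptyset$ holds because $G'$ carries a $4$-cycle. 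When $\mathcal{F}_{G'}$ \emph{is} a simplex, $G$ must have a loop (else $\mathcal{F}_{G'} = \mathcal{P}_G$ would be a simplex). Lemma \ref{bipartite} is now unavailable, and I would instead count, via Lemma \ref{edge}, the polytope edges through a loop vertex $\rho(\{p,p\})$, comparing the count with $\dim \mathcal{P}_G = |V(G)| - 1$. Using condition $(*)$ (so that the loop vertices already form a clique), this comparison pins down $G'$ to be edgeless when there are two or more loops, with each loop vertex joined to all of $W$, i.e. case $(\gamma)$, and in the single-loop sub-case to be a cycle-free complete bipartite graph, i.e. case $(\beta)$, while ruling out all other possibilities.

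For $(iii) \Rightarrow (ii)$ I would treat the three families separately. That none of them is a simplex follows from Proposition \ref{simplexcondition}: case $(\alpha)$ contains an even cycle, while cases $(\beta)$ and $(\gamma)$ contain a loop together with a second cycle. Simplicity for $(\alpha)$ and for the $4$-cycle instances of $(\beta)$ is exactly the ``If'' part of Lemma \ref{bipartite}; for the degenerate $(\beta)$ graphs and for $(\gamma)$ I would rerun the edge count of Lemma \ref{edge} at each vertex type (a loop vertex, a vertex $\rho(\{i,j\})$ of the bipartite part, and an edge incident to a loop) and confirm it lies on exactly $\dim \mathcal{P}_G$ edges. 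To promote simplicity to smoothness, I would record the primitive edge directions at every vertex: within the two colour classes these are the differences $\eb_i - \eb_{i_0}$ and $\eb_j - \eb_{j_0}$, and toward a loop vertex they are the vectors of the form $\eb_d - \eb_i$. One then checks that at each vertex these $\dim \mathcal{P}_G$ directions form a lattice basis --- the defining condition for smoothness --- which for the bipartite directions is transparent and for the loop directions requires only a short computation.

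The crux is the branch of $(i) \Rightarrow (iii)$ in which $\mathcal{F}_{G'}$ is a simplex: there the powerful Lemma \ref{bipartite} says nothing, and the entire structure of case $(\gamma)$ (and of the degenerate $(\beta)$) has to be recovered from local edge counts at the loop vertices, leaning on $(*)$ to control the edges among those vertices. The smoothness check in $(iii) \Rightarrow (ii)$ is the other essential ingredient, since it is precisely what makes $(i)$ and $(ii)$ equivalent rather than giving only $(ii) \Rightarrow (i)$; it is routine but must be carried out for all vertex types and cannot be omitted.
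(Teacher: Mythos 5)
Your overall architecture matches the paper's: the cycle $(ii)\Rightarrow(i)\Rightarrow(iii)\Rightarrow(ii)$, the use of Lemmas \ref{newlemma2} and \ref{newlemma} to get connectivity and $W\neq\emptyset$, the dichotomy on whether ${\cal F}_{G'}$ is a simplex with Lemma \ref{bipartite} handling the non-simplex branch, and the explicit lattice-basis verification of the primitive edge directions for $(iii)\Rightarrow(ii)$. These parts are fine.

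The gap is in the branch you yourself identify as the crux, namely ${\cal F}_{G'}$ a simplex. You propose to recover the structure of $G'$ by counting the polytope edges through a \emph{loop} vertex $\rho(\{p,p\})$ and comparing with $\dim {\cal P}_G$. That count is blind to the internal structure of $G'$ once every vertex of $W$ is adjacent to $p$: by Lemma \ref{edge} (i)--(iii), the edges of ${\cal P}_G$ through $\rho(\{p,p\})$ are the other loops, the edges incident to $p$, and those edges $\{k,\ell\}$ of $G'$ with $k$ or $\ell$ not adjacent to $p$ --- so if $G$ has a single loop at $p$, every vertex of $W$ is joined to $p$, and $G'$ is \emph{any} tree on $W$, the count at $\rho(\{p,p\})$ is exactly $|V(G)|-1=\dim{\cal P}_G$. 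Yet such a $G$ need not be of type $(\beta)$: take $W=\{1,2,3,4\}$ with $G'$ the path $\{1,2\},\{2,3\},\{3,4\}$ and a loop vertex $0$ joined to all of $W$. Here ${\cal P}_G$ is not simple, but the failure is visible only at a vertex such as $\rho(\{1,2\})$, which by Lemma \ref{edge} lies on the five edges toward $\rho(\{0,1\})$, $\rho(\{0,2\})$, $\rho(\{0,4\})$, $\rho(\{2,3\})$, $\rho(\{3,4\})$ while $\dim{\cal P}_G=4$. So your criterion cannot distinguish a star from a longer tree, and hence cannot establish that $G'$ is complete bipartite in case $(\beta)$, nor can it rule out odd cycles in components of $G'$. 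The paper's proof does all of this by counting edges at vertices $\rho(f)$ with $f\in E(G')$ inside carefully chosen induced subgraphs (first showing each component of $G'$ is edgeless or a tree, then that there is exactly one relevant loop, then $q=1$, $U=W$, and finally that any two edges of $G'$ meet, forcing a star). Some analogue of those non-loop-vertex counts is indispensable and is missing from your sketch.
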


\begin{proof}
{\bf
(``(ii) $\Rightarrow $ (i)")
}
Obvious.

\noindent
{\bf
(``(i) $\Rightarrow $ (iii)")
}
Suppose that 
the edge polytope ${\cal P}_G$ is simple,
but not a simplex.
By Lemma \ref{newlemma2},
$G$ is connected.
Moreover,
by Lemma \ref{newlemma},
we have $W \neq \emptyset$.
If ${\cal F}_{G'}$ is not a simplex, then thanks to 
Lemma \ref{bipartite},
$G$ satisfies either ($\alpha$) or ($\beta$).

Suppose that 
${\cal F}_{G'}$ is a simplex. 
Since ${\cal P}_G$ is not a simplex,
$G$ has at least one loop.
Let $G'_1, \ldots, G'_q$ denote the connected
components of $G'$.
Since ${\cal F}_{G'}$ is a simplex,
it follows that each of the faces ${\cal F}_{G'_1},
\ldots, {\cal F}_{G'_q}$ is a simplex
and that $I_{G'_i} = (0)$ for
$i = 1, \ldots, q$, that is,
each of the connected components 
$G'_1, \ldots, G'_q$ has no nontrivial even closed walk.

Since $G$ is connected and has at least one loop,
for each $G'_r$,
there exists a loop $e = \{i,i\} \in E(G)$ such that
$\{ i, j \}$ with $j \in V(G'_r)$ 
belongs to $E(G)$.
%
%
Suppose that 
$G'_r$ contains a cycle of
odd length. 
Let $f = \{ j, j' \} \in E(G'_r)$.
Let $G''$ denote the induced subgraph of $G$
on $V(G'_r) \cup \{ i \}$.
Let $a$ denote the dimension of the face
${\cal F}_{G'_r}$.
Then the dimension of ${\cal F}_{G''}$
is $a + 1$.
However, the number of edges of the face 
${\cal F}_{G''}$ to which the vertex $\rho(f)$ 
belongs is $a + 2$.
This contradiction shows that 
either $E(G'_r) = \emptyset$ or
$G'_r$ is a tree for all $r$.
(A {\em tree} is a connected graph with 
no cycle.)

\smallskip

{\bf Case 1.}
$E(G'_1) = \cdots = E(G'_{q}) = \emptyset$.

%
%
\noindent
By Lemma \ref{newlemma2},
the degree of each vertex in $W$ is at least 2.
Hence $G$ has at least 2 loops.
Let $e$ be a loop at $i \in V(G)$
and $e'$ a loop at $i' \in V(G)$,
where $i \neq i'$.
Let 
$U_e = \{ j \in W \ | \  \{i,j\} \in E(G) \}$ and 
$U_{e'} = \{ j \in W \ | \  \{i',j\} \in E(G) \}$.
We show that, 
if $U_{e} \cap U_{e'} \neq \emptyset$,
then $U_{e} = U_{e'}$.
To see why this is true,
let $j_r \in U_{e} \cap U_{e'}$
and $j_s \in U_{e} \setminus U_{e'}$.
Let $G''$ denote the induced subgraph of $G$
on $\{ i, i', j_r, j_s \}$.
Then the dimension of the face ${\cal F}_{G''}$
is $3$.  However, the vertex
$\rho(\{i, j_s\})$ belongs to $4$ edges of
${\cal F}_{G''}$.  Hence
${\cal F}_{G''}$ cannot be simple.
Thus $U_{e} = U_{e'}$.
Thanks to Lemma \ref{newlemma},
it follows that
$U_{e} = W$ holds for any loop $e$ of $G$.
Then $G$ satisfies the condition ($\gamma$).

{\bf Case 2.}
$E(G'_{q}) \neq \emptyset$.

%
%
\noindent
Let $e$ be a loop at $i \in V(G)$
and $e'$ a loop at $i' \in V(G)$ with $i \neq i'$.
Suppose that $\{ i, j \}$ with $j \in V(G'_r)$
and $\{ i', j' \}$ with $j' \in V(G'_r)$ 
belong to $E(G)$.
Let $f = \{ j, j'' \} \in E(G'_r)$.
Let $G''$ denote the induced subgraph of $G$
on $V(G'_r) \cup \{ i, i' \}$.
Let $a$ denote the dimension of the face
${\cal F}_{G'_r}$.
Then the dimension of ${\cal F}_{G''}$
is $a + 3$.
However, the number of edges of the face 
${\cal F}_{G''}$
to which the vertex $\rho(f)$ belongs is 
at least $a + 4$.  
This contradiction shows that if 
$\{ i, j \}$ with $j \in V(G'_r)$
and $\{ i', j' \}$ with $j' \in V(G'_r)$ 
belongs to $E(G)$, then $E(G'_r) = \emptyset$.
Thus, in particular,
there exists exactly one loop $e_0= \{i_0,i_0\}$ 
of $G$ such that
$\{ i_0, j \}$ with $j \in V(G'_q)$ 
belongs to $E(G)$.

We will show that $G$ has no loop except for $e_0$.
Suppose that $G$ has at least two loops.
Thanks to Lemma \ref{newlemma}, 
all loops of $G$ and $E(G'_{q})$ are
in the connected graph $\widetilde{G}$.
Hence there exists a loop
$e = \{i ,i \}$ of $G$ with $e \neq e_0$ such that
both $\{i_0, j_s\}$ and $\{i , j_s\}$ belongs to $E(G)$
for some $E(G'_s) = \emptyset$ 
with $V(G'_s) = \{ j_s \}$.
%
Let $f \in E(G'_q)$.
Let $G''$ denote the induced subgraph of $G$
on $V(G'_q) \cup \{ j_s, i_0, i  \}$.
Let $a$ denote the dimension of the face
${\cal F}_{G_q'}$.
Then the dimension of ${\cal F}_{G''}$
is $a + 4$.
However, the number of edges of the face 
${\cal F}_{G''}$
to which the vertex $\rho(f)$ belongs is 
at least $a + 5$.  
This is a contradiction.
Thus $G$ has exactly one loop $e_0=\{i_0,i_0\}$.

Since $G$ has no vertex of degree 1 by Lemma \ref{newlemma2},
we have $E(G'_i) \neq \emptyset$ for all $i$.
Next we will show $q = 1$.
%
%
Suppose that $\{ i_0, j \}$ and $\{ i_0, j' \}$
with $j, j' \in V(G'_r)$ and $j \neq j'$
belong to $E(G)$.
Let $f \in E(G'_s)$ with $s \neq r$.
Let $G''$ denote the induced subgraph of $G$
on $V(G'_r) \cup V(G'_s)$
and $G'''$ the induced subgraph of $G$
on $V(G'_r) \cup V(G'_s) \cup \{ i_0 \}$.
Let $a$ denote the dimension of the face
${\cal F}_{G''}$.
Then the dimension of
${\cal F}_{G'''}$ is $a + 2$.
However, 
the number of edges of the face ${\cal F}_{G'''}$
to which the vertex $\rho(f)$ belongs is at least
$a + 3$.  
This contradiction shows that if 
$\{ i_0, j \}, \{ i_0, j' \} \in E(G)$,
where $j, j' \in V(G'_r)$
with $j \neq j'$,
and if there is $s \neq r$ 
with $E(G'_s) \neq \emptyset$, 
then $j = j'$.
Hence, if $q \geq 2$, then at most one vertex of $G'_i$
is incident to the loop $e_0$.
Then there exists a vertex of degree 1
and hence it contradicts to Lemma \ref{newlemma2}.
Thus $q = 1$ and at least two vertices of
$G'_1 = G'$ are incident to the loop $e_0$.

Since $q = 1$, $G'$ is a tree on the vertex set $W$ and
$V(G) = W \cup \{ i_0 \}$.
%
Let 
$
U = \{ j \in W \ | \  \{ i_0, j \} \in E(G)\}
$.
Then, $|U| \geq 2$,
$\dim {\cal P}_G = |V(G)| -1 = |W|$, and
$\dim {\cal P}_{G'}=|W| - 2$.
If there is an edge $f = \{ k, \ell \} \in E(G')$
with $k \not\in U$ and 
$\ell \not\in U$, then
the number of edges of 
${\cal P}_{G}$
to which the vertex $\rho(f)$ belongs is 
at least $|W| + 1 $, a contradiction.  
Hence, if $\{ k, \ell \}$ is an edge of $G'$,
then either $k \in U$ or $\ell \in U$.

Suppose that $U \neq W$.
Let $j \in W \setminus U$.
Since $G$ has no vertex of degree 1,
the degree of $j$ in the graph $G'$ is at least 2.
Hence suppose that
$ \{j,i_1\}$ and $\{j,i_2\}$ belong to $E(G')$.
Since $j \notin U$, we have $i_1, i_2 \in U$.
Then the induced subgraph $\widehat{G}$ on 
$\{i_0,j,i_1,i_2\}$ satisfies 
$E(\widehat{G}) =\{ \{i_0,i_0\}, \{j,i_1\}, \{j,i_2\}, \{i_0,i_1\}, \{i_0,i_2\}\}$.
Since the face ${\cal F}_{\widehat{G}}$
is not simple, this is a contradiction.
Thus, we have $U =W$.

Let $\{k, k'\}, \{\ell, \ell'\} \in E(G')$
and suppose that
$\{k, k'\} \cap \{\ell, \ell'\} = \emptyset$.
Since $G'$ has no cycle of length $4$,
we may assume that $\{k, \ell' \} \notin E(G)$
and $\{k' , \ell' \} \notin E(G)$.
Then the number of edges of
${\cal P}_{G}$
to which the vertex $\rho(\{k, k'\})$ belongs is at least
$|W|+1$, a contradiction.
Hence if $\{k, k'\}, \{\ell, \ell'\} \in E(G)$, 
then $\{k, k'\} \cap \{\ell, \ell'\} \neq \emptyset$.
Since $G'$ is a tree, it follows that
$G'$ has a vertex $j_0$
such that each edge of $G'$ is of the form
$\{ j_0, j \}$. 
Hence $G'$ is a complete bipartite graph and 
$G$ satisfies the condition $(\beta)$.

\noindent
{\bf
(``(iii) $\Rightarrow $ (ii)")
}
Thanks to Proposition \ref{simplexcondition},
${\cal P}_G$ is not a simplex.
We will show that ${\cal P}_G$ is smooth.
For each vertex $v$ of ${\cal P}_G$,
let $M_v$ denote the matrix whose columns are
set of all primitive edge directions at $v$.
Let ${\bf 1} = (1,\ldots,1)$ and let $E_m$ be the identity matrix.

($\alpha$)
Suppose that $G$ is a complete bipartite graph.
Let $e = \{1,1'\} \in E(G)$.
For $f \in E(G)$, the convex hull of $\{\rho(e), \rho(f)\}$ is an edge of ${\cal P}_G$
if and only if either $f = \{1, i'\}$ with $i' \neq 1'$ or $f = \{i , 1'\}$ with $i \neq 1$.
Then 
$
M_{\rho(e)}
=
\left(
\begin{array}{c|c}
- {\bf 1}  & \\
E_m & \\
\hline
 & -{\bf 1}\\
 & E_{m'}
\end{array}
\right)
.
$
Since they form a lattice basis, ${\cal P}_G$ is smooth.

($\beta$)
Suppose that $G$ has exactly one loop
and $G'$ is a complete bipartite graph with vertex set $V_1 \cup V_2$.
Let $\{1,1\} \in E(G)$,
$V_1 = \{2, \ldots, m\} $ and $V_2 = \{m+1,\ldots,d\}$.

Let $e = \{1,1\}$.
For $f =\{i,j\} \in E(G)$, the convex hull of $\{\rho(e), \rho(f)\}$ is an edge of ${\cal P}_G$
if and only if $i = 1$.
Hence
$
M_{\rho(e)}
=
\left(
\begin{array}{c}
-{\bf 1}\\
E_{d-1}
\end{array}
\right)
$
and columns of $M_{\rho(e)}$ form a lattice basis.

Let $e = \{1,2\} \in E(G)$.
For $f \in E(G)$ with $e \neq f$, 
the convex hull of $\{\rho(e), \rho(f)\}$ is an edge of ${\cal P}_G$
if and only if $f$ belongs to 
$
\{ \{1,1\} \} \cup \{ \{1,j\}  \ |  \ j \in V_1 \setminus \{2\} \}
\cup \{ \{2,j\} \ | \ j \in V_2 \}.
$
Hence
$
M_{\rho(e)}
=
\left(
\begin{array}{c|c|c}
1 & &-{\bf 1}\\
\hline
-1 &-{\bf 1} &  \\
 & E_{m-2} & \\
\hline
 &  & E_{d-m}\\
\end{array}
\right)
$
and columns of $M_{\rho(e)}$ form a lattice basis.

Let $e = \{2, m+1\} \in E(G)$.
For $f =\{i,j\}\in E(G)$ with $e \neq f$, 
the convex hull of $\{\rho(e), \rho(f)\}$ is an edge of ${\cal P}_G$
if and only if
either $i = 2$ or $i = m+1$.
Hence
$
M_{\rho(e)}
=
\left(
\begin{array}{c|c|c|c}
1 & & 1 & \\
\hline
 & & -1 & -{\bf 1}\\
 & & & E_{m-2}\\
 \hline
 -1 & -{\bf 1} &  & \\
 & E_{d-m} & & 
\end{array}
\right)
$
and columns of $M_{\rho(e)}$ form a lattice basis.

Thus, in all cases,
columns of $M_{\rho(e)}$ form a lattice basis
and hence ${\cal P}_G$ is smooth.

($\gamma$)
Let $V(G') = \{1,\ldots,m\}$ and $V(G) = \{1,\ldots, m , m+1,\ldots,d\}$.

Let $e = \{1,1\}$.
For $f  \in E(G)$ with $e \neq f$, 
the convex hull of $\{\rho(e), \rho(f)\}$ is an edge of ${\cal P}_G$
if and only if 
either $f = \{i,i\}$ with $2 \leq i \leq m$ or
$f = \{1,j\}$ with $m+1 \leq j \leq d$.
Hence
$
M_{\rho(e)}
=
\left(
\begin{array}{c}
-{\bf 1}\\
E_{d-1}
\end{array}
\right)
$
and columns of $M_{\rho(e)}$ form a lattice basis.

Let $e = \{1,m+1\}$.
For $f  \in E(G)$ with $e \neq f$, 
the convex hull of $\{\rho(e), \rho(f)\}$ is an edge of ${\cal P}_G$
if and only if $f$ belongs to 
$\{ \{1,1\} \} \cup
\{ \{i,m+1\} \ | \ 1 < i \leq m\} \cup
\{ \{1,j\} \ | \ m+1 < j \leq d\}
$.
Hence
$
M_{\rho(e)}
=
\left(
\begin{array}{c|c|c}
1 & -{\bf 1} & \\
 & E_{m-1} & \\
\hline
 -1 &  & -{\bf 1}\\
 & & E_{d-m-1}
\end{array}
\right)
$
and columns of $M_{\rho(e)}$ form a lattice basis.

Thus, in all cases,
columns of $M_{\rho(e)}$ form a lattice basis
and hence ${\cal P}_G$ is smooth, as desired.
\end{proof}




On the other hand,
there exist graphs $G$ such that ${\cal P}_G$ is a simplex,
but not smooth.

\begin{Example}
\label{nonsmooth}
{\rm
The edge polytope of the graphs $G_1$ and $G_2$
is a simplex (triangle) but not smooth:
\begin{itemize}
\item[(i)]
$
E(G_1) = \{ \{1,1\}, \{1,2\}, \{2,2\}, \{1,3\}  \}
.$
\item[(ii)]
$
E(G_2) = \{ \{1,1\}, \{1,2\}, \{2,2\}, \{3,4\}  \}
.$ 
\end{itemize}
}
\end{Example}

\begin{Theorem}
\label{simplexsmooth}
Let $G$ be a finite graph on the vertex set 
$V(G) =\{1,\ldots,d\}$ with $d \geq 2$.
Suppose that ${\cal P}_G$ is a simplex.
Then ${\cal P}_G$ is smooth if and only if 
$G$ satisfies one of the followings:
\begin{itemize}
\item[(i)]
$E(G) = \{ \{i,j\} \ | \ 1 \leq i \leq j \leq d\}$;
\item[(ii)]
$G$ satisfies the condition in Proposition \ref{simplexcondition}
and has at most one loop.
\end{itemize}
\end{Theorem}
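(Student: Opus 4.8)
The plan is to reduce smoothness to a single arithmetic condition at each vertex and then to read off (i) and (ii) from the distribution of loops. Since a simplex is automatically simple, smoothness of ${\cal P}_G$ means that at \emph{each} of its vertices the primitive edge directions form a basis of the lattice $\Lambda=\ZZ^d\cap\mathrm{lin}({\cal P}_G-v)$, where $v$ is that vertex and $\mathrm{lin}$ denotes the linear span of the differences. Writing the vertices as $\rho(e_0),\dots,\rho(e_m)$ and fixing $\rho(e_0)$, the edge directions there are the $\rho(e_j)-\rho(e_0)$ and the primitive ones are $(\rho(e_j)-\rho(e_0))/g_j$ with $g_j=\gcd(\rho(e_j)-\rho(e_0))$. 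The differences $\rho(e_j)-\rho(e_0)$ generate a sublattice of $\Lambda$ whose index is the normalized volume $V$ of ${\cal P}_G$, and dividing the $j$th generator by $g_j$ multiplies this index by $1/g_j$; hence the primitive directions at $\rho(e_0)$ form a basis of $\Lambda$ if and only if $V=\prod_{j}g_j$. This reduction is what I would isolate first.

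Next I would compute the $g_j$ explicitly. Since $\rho(\{i,j\})=\eb_i+\eb_j$, a direct inspection of the possible incidence patterns of $e_0$ and $e_j$ shows that $\gcd(\rho(e_j)-\rho(e_0))=2$ precisely when both $e_0$ and $e_j$ are loops, and equals $1$ in every other case. Consequently, if $L$ denotes the number of loops of $G$, then $\prod_j g_j=2^{\,L-1}$ when $e_0$ is a loop and $\prod_j g_j=1$ when $e_0$ is not a loop. Combined with the reduction above, this gives the working criterion: ${\cal P}_G$ is smooth if and only if $V=1$ whenever $G$ has a non-loop edge and $V=2^{\,L-1}$ whenever $G$ has a loop.

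For the implication ``smooth $\Rightarrow$ (i) or (ii)'' I would argue as follows. If $G$ had both a non-loop edge and at least two loops, the criterion would force simultaneously $V=1$ and $V=2^{\,L-1}\ge 2$, which is absurd. Hence a smooth ${\cal P}_G$ has either no non-loop edge or at most one loop. In the first case every edge is a loop, and then by condition $(*)$ together with the absence of isolated vertices $G$ must be the complete graph with all loops, i.e.\ case (i). In the second case $\widetilde G=G$, and since ${\cal P}_G$ is a simplex $G$ satisfies the conditions of Proposition \ref{simplexcondition}; together with ``at most one loop'' this is case (ii).

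For the converse I would verify the criterion in each case. Under (i) we have ${\cal P}_G=\mathrm{conv}(2\eb_1,\dots,2\eb_d)$, whose normalized volume with respect to $\ZZ^d\cap\{\sum x_i=0\}$ is $2^{\,d-1}=2^{\,L-1}$, so the criterion holds at every (loop) vertex. Under (ii) both required values collapse to $V=1$, so the whole problem reduces to showing that the edge polytope is a unimodular simplex. Here I would pass to the incidence matrix $B$ of $G$: for a connected odd-unicyclic graph $|\det B|=2$, while the incidence matrix of a tree is totally unimodular, and an augmented-determinant computation that accounts for the common supporting hyperplane $\sum x_i=2$ (which replaces the relevant determinant by one half of it) converts these facts into $V=1$. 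The main obstacle is exactly this volume computation: one must track how the components of $G$ interact through the hyperplane $\sum x_i=2$ and verify that, under (ii), the resulting determinant is forced to be $\pm1$; every other step is a routine unwinding of the criterion.
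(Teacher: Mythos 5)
Your lattice-index criterion --- that at a vertex $\rho(e_0)$ of the simplex the primitive edge directions form a lattice basis if and only if the normalized volume $V$ equals $\prod_j g_j$, with $g_j=2$ exactly when $e_0$ and $e_j$ are both loops and $g_j=1$ otherwise --- is correct, and it is a genuinely different route from the paper, which instead argues in the ``only if'' direction by forbidding the two non-smooth triangles of Example \ref{nonsmooth} as faces ${\cal F}_{G'}$, and in the ``if'' direction realizes case (i) as a face of a smooth polytope from Theorem \ref{bunrui}. However, there is a genuine error in your ``only if'' direction: the criterion may only be imposed at edges $e$ for which $\rho(e)$ is actually a vertex of ${\cal P}_G$, i.e.\ at edges of $\widetilde{G}$, whereas a non-loop edge $\{i,j\}$ with loops at both $i$ and $j$ contributes no vertex at all. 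In case (i) the graph has many non-loop edges and $d\geq 2$ loops, yet ${\cal P}_G=\con\{2\eb_1,\ldots,2\eb_d\}$ is smooth, precisely because none of those non-loop edges yields a vertex. As written, your dichotomy ``either no non-loop edge or at most one loop'' therefore fails; worse, its first alternative is self-contradictory for $d\geq 2$ (no non-loop edge plus no isolated vertex forces a loop at every vertex, and then $(*)$ forces all non-loop edges to be present), so your argument literally concludes ``smooth $\Rightarrow$ at most one loop'' and excludes case (i). The repair is to phrase everything in terms of $\widetilde{G}$: either $\widetilde{G}$ has no non-loop edge (which does lead to case (i)) or $G$ has at most one loop.

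The second gap is in the converse for case (ii): showing $V=1$, i.e.\ that ${\cal P}_G$ is a unimodular simplex, is the entire content of that implication, and you have only named the computation rather than performed it. The single-component case is fine (for a connected graph with exactly one odd cycle, $|\det B|=2$ and the normalized volume is $|\det B|/2=1$ since the hyperplane $\sum x_i=2$ has lattice height $2$), but the case of several components --- trees plus at most one odd-unicyclic component, interacting only through that hyperplane --- requires an explicit block-determinant argument that you defer. This step must be supplied for the proof to be complete. (It is worth the effort: the paper's own proof of this implication rests on the claim that an integral simplex with no lattice points besides its vertices is unimodular, which is false in dimension $\geq 3$, so a completed incidence-matrix computation would in fact be more robust than the published argument.)
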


\begin{proof}
{\bf (``Only if'')}
Suppose that the edge polytope ${\cal P}_G$
of a graph $G$ is a simplex and smooth.
Let $G^{(1)},\ldots,G^{(r)}$ be the connected components of $G$.  

Suppose that
$G$ has at least two loops
and that $G^{(1)}$ has a loop of $G$.
Thanks to the condition $(*)$, all loops of $G$ are in $G^{(1)}$.
Since ${\cal P}_G$ is smooth, any face of ${\cal P}_G$ is 
smooth.
Hence none of the graphs $G_1$, $G_2$ in Example \ref{nonsmooth}
is the induced subgraph of $G$.
If $r \geq 2$, then a loop of $G^{(1)}$
together with an edge of $G^{(2)}$
form an induced subgraph of $G$.
This contradicts that 
$G_2$ is not an induced subgraph of $G$.
Hence $r =1$, that is, $G$ is connected.
Suppose that there exists a vertex $i$ such that $G$ has no loop at $i$.
Since $G$ is connected, there exists a path from $i$ to 
a vertex $j$ where $G$ has a loop at $j$.
Then it follows that $G_1$ is the induced subgraph of $G$.
This is a contradiction.
Thus $G$ has a loop at every vertex.

\smallskip

\noindent
{\bf (``If'')}
Suppose that $G$ is a graph satisfying the condition (i).
Since $G$ is the induced subgraph of a graph
in Theorem \ref{bunrui} (iii) ($\gamma$),
${\cal P}_G$ is a face of a smooth polytope.
Hence ${\cal P}_G$ is smooth.

Suppose that $G$ is a graph satisfying the condition (ii).
Since $G$ has at most one loop, 
${\cal P}_G \cap \ZZ^d$ coinsides with the set of vertices of ${\cal P}_G$.
It is well-known that, if $\sigma$ is an integral simplex having no lattice point
except for its vertices, then we can transform $\sigma$ to
the standard simplex of $\RR^{\dim \sigma}$, that is,
the convex hull of $\{{\bf 0}, \eb_1,\ldots,\eb_{\dim \sigma} \}$.
Since the standard simplex is smooth, 
${\cal P}_G$ is smooth.
\end{proof}

\section{Quadratic Gr\"obner bases}

In this section, we discuss quadratic Gr\"obner bases of
toric ideals arising from simple edge polytopes.

Let 
${\cal R}_G = K[\{x_{i j}\}_{\{i,j\} \in E(G)}]$ be
the polynomial ring in $n$ variables over $K$ and
the {\em toric ideal} of $G$ is the ideal
$I_G$ $(\subset {\cal R}_G)$ which is the kernel
of the surjective ring homomorphism
$\pi : {\cal R}_G \to K[G]$ defined by setting
$\pi(x_{i j}) = t_i t_j$ for $\{i,j\} \in E(G)$.
Let, as before, $W$ denote the set of those vertices 
$i \in V(G)$
such that $G$ has no loop at $i$ and
$G'$ the induced subgraph of $G$ on $W$.

Suppose that 
the edge polytope ${\cal P}_G$ is simple
and $I_G \neq ( 0 )$.
Then, either
$G$ satisfies one of $(\alpha)$, $(\beta)$ and $(\gamma)$
in the condition (iii) of Theorem \ref{bunrui}
or 
$G$ has at least two loops and satisfies the condition in Proposition \ref{simplexcondition}.
If $G$ satisfies $(\alpha)$,
then let $V(G) = V_1 \cup V_2 = \{1,\ldots,m\} \cup \{m+1,\ldots,d \}$.
If $G$ satisfies $(\beta)$,
then suppose that $G$ has a loop at $1$ and
let $V(G') = V_1 \cup V_2 = \{2,\ldots,m \} \cup \{m +1,\ldots,d \}$.
If $G$ satisfies $(\gamma)$,
then suppose that $G$ has a loop at each of $\{1,\ldots,m\}$ and
let $V(G') = \{m+1,\ldots,d \}$.
If ${\cal P}_G$ is a simplex with $I_G \neq ( 0 )$, then 
suppose that $G$ has a loop at each of $\{1,\ldots,m\}$.
Let $\lexi$ be a lexicographic order on ${\cal R}_G$
satisfying that $x_{k \ell} \lexi x_{ ii }$ where $k < \ell$
and that
$x_{i j} \lexi x_{k \ell}$ where $i < j , k < \ell
\Leftrightarrow$ either (i) $i < k$ or (ii)
$i=k$
and
$j > \ell$.

\begin{Theorem}
\label{Boston}
Work with the same notation as above.
If the edge polytope ${\cal P}_G$ is simple
and $I_G \neq ( 0 )$,
then the reduced Gr\"obner basis of the toric ideal $I_G$
with respect to $\lexi$
consists of quadratic binomials
with squarefree initial monomials.
\end{Theorem}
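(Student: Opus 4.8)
The plan is to establish the Gröbner basis property by combining a structural understanding of the generators $f_\Gamma$ with a direct verification that, under the prescribed lexicographic order $\lexi$, every nontrivial even closed walk reduces via quadratic binomials whose leading terms are squarefree. Since the edge polytope is simple with $I_G \neq (0)$, Theorem \ref{bunrui} and Proposition \ref{simplexcondition} restrict $G$ to one of the four highly structured families $(\alpha)$, $(\beta)$, $(\gamma)$, or the non-simplex-but-simplicial case with at least two loops. The first step is to write down, in each of these cases separately, an explicit candidate set $\mathcal{G}$ of quadratic binomials. For the complete bipartite part these are the familiar commutation relations $x_{ij}x_{k\ell} - x_{i\ell}x_{kj}$ coming from $4$-cycles, while the loops contribute binomials such as $x_{ii}x_{j\ell} - x_{ij}x_{i\ell}$ and (in the $(\gamma)$ and multi-loop cases) $x_{ii}x_{jj} - x_{ij}^2$. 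Each such binomial indeed lies in $I_G$ because its two monomials map under $\pi$ to the same element of $K[G]$; this is immediate from the definition $\pi(x_{ij}) = t_it_j$.

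The second step is to verify that the chosen leading terms under $\lexi$ are squarefree and that $\mathcal{G}$ is in fact a Gröbner basis. I would do this via Buchberger's criterion, checking that every $S$-polynomial of two elements of $\mathcal{G}$ reduces to zero modulo $\mathcal{G}$. The order $\lexi$ was engineered precisely so that loop variables $x_{ii}$ are large and so that among ordinary edge variables the leading monomial of a commutation relation is $x_{ij}x_{k\ell}$ with $i<k$; this forces the initial monomial of each generator to be squarefree, since in a true even-closed-walk relation no variable can appear to a power exceeding one on the leading side without the walk being reducible. The key technical point is that for a simple edge polytope the only even closed walks that matter are short: one must argue that any nontrivial even closed walk can be rewritten, using the quadratic relations in $\mathcal{G}$, into one of length $4$ (an ordinary $4$-cycle, a loop-plus-cherry configuration, or a two-loop configuration). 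This reduction is where the simplicity of $\mathcal{P}_G$—equivalently the rigid structure of $G$ coming from Theorem \ref{bunrui}—does the real work, because in a complete bipartite graph or a graph with all loops concentrated as in $(\gamma)$ there are no long primitive even closed walks.

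The third step is to confirm that $\mathcal{G}$ generates $I_G$ and that the listed leading terms are exactly the reduced Gröbner basis, i.e.\ no leading monomial divides another and each trailing monomial is reduced. By Proposition \ref{evenclosedwalk}, $I_G$ is generated by the $f_\Gamma$ over nontrivial even closed walks $\Gamma$; combining this with the length-$4$ reduction from the previous step shows $\mathcal{G}$ generates. One then passes to the reduced Gröbner basis by the standard autoreduction, verifying that no simplification collapses the quadratic or squarefree property.

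\textbf{The main obstacle} I anticipate is the case analysis forced by the loops: the graphs in families $(\beta)$, $(\gamma)$, and the multi-loop simplex case have edges of three heterogeneous types (loops $x_{ii}$, loop-adjacent edges $x_{ij}$, and complete-bipartite edges), and one must check that $S$-polynomials \emph{across} these types—especially those pairing a loop relation $x_{ii}x_{jj}-x_{ij}^2$ with a commutation relation—reduce cleanly without introducing a cubic or a non-squarefree leading term. Getting the lexicographic tie-breaking rule (the clause $i=k$ and $j>\ell$) to interact correctly with the quadratic loop relations is the delicate part; the chosen order was clearly tuned so that $x_{ij}^2$ never becomes a leading monomial, and verifying this uniformly across all four structural families is the crux of the proof.
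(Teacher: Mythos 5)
Your overall strategy (write down explicit quadrics in each structural family, then verify Buchberger's criterion) is a legitimate route, but it is not the paper's route, and as written it has a genuine gap at exactly the point you flag as the crux. The paper never runs Buchberger at all. Instead it embeds every case into the graph $G_d$ with $E(G_d)=\{\{i,j\} \mid 1\le i\le j\le d\}$, whose edge ring is the second Veronese subring; it cites the known quadratic ``sorting'' Gr\"obner basis ${\cal G}_d$ of $I_{G_d}$, checks that $\lexi$ picks out the same (squarefree) initial monomials, and then observes the one structural fact that the classification in Theorem \ref{bunrui} actually supplies: for each family $(\alpha)$, $(\beta)$, $(\gamma)$ the variable set of ${\cal R}_G$ is closed under the sorting operation, i.e.\ if $x_{ij}x_{k\ell}$ or $x_{i\ell}x_{jk}$ lies in ${\cal R}_G$ then so does $x_{ik}x_{j\ell}$. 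The elimination property of the lexicographic order then gives ${\cal G}_d\cap{\cal R}_G$ as a Gr\"obner basis of $I_G$ with no S-polynomial computations; the residual simplex-with-loops case is handled by showing ${\cal R}_G/I_G$ is a polynomial extension of ${\cal R}_{G_m}/I_{G_m}$. Your proposal is missing both of these devices (closure under sorting, and elimination), which are what make the statement a two-paragraph proof rather than a long case analysis.

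Two concrete problems with your plan as stated. First, the assertion that in a complete bipartite graph ``there are no long primitive even closed walks'' is false: in $K_{p,q}$ with $p,q\ge 3$ the $6$-cycles give primitive binomials (no quadric in $I_G$ has both of its monomials dividing the respective monomials of a $6$-cycle binomial). What is true is that these cubics reduce to zero modulo the quadrics, but that is precisely the Buchberger/normal-form verification you have deferred, so you cannot use ``no long primitive walks'' as the reason the verification succeeds. Second, the cross-type S-polynomial checks in families $(\beta)$ and $(\gamma)$ and in the multi-loop simplex case --- pairing $x_{ii}x_{jj}-x_{ij}^2$ against the commutation and loop-cherry relations --- are exactly where the work lives, and your write-up acknowledges them without performing any of them. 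Until those reductions are exhibited (or replaced by the paper's sorting-plus-elimination argument), the proof is a plausible program rather than a proof.
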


\begin{proof}
Suppose that ${\cal P}_G$ is not a simplex.
Let $G_d$ be a graph with the edge set
$E(G_d) = \{ \{i,j\} \ | \ 1 \leq i \leq j \leq d \}$.
Then, $K[G_d]$ is called the second Veronese subring
of $K[\tb]= K[t_1,\ldots,t_d]$.
It is known \cite[Chapter 14]{Stu} that,
with respect to a ``sorting" monomial order,
the toric ideal
$I_{G_d}$ has a quadratic Gr\"obner basis
$
{\cal G}_d = 
\{
\underline{x_{i j} x_{k \ell}} - x_{i k} x_{j \ell} 
\ | \ 
i \leq j < k \leq \ell
\}
\cup
\{
\underline{x_{i \ell} x_{j k}} - x_{i k} x_{j \ell} 
\ | \ 
i < j \leq k < \ell
\}
$
where 
the initial monomial of each $g \in {\cal G}_d$
is the first monomial of $g$ and squarefree.
See also \cite{multi}.
Since 
the initial monomial of $g \in {\cal G}_d$ with respect to $\lexi$ is 
also the first monomial, ${\cal G}_d$ is a Gr\"obner basis with respect to
$\lexi$.
It then follows that
if either $x_{i j} x_{k \ell}$ or $x_{i \ell} x_{j k}$ belongs to ${\cal R}_G$,
then $x_{i k} x_{j \ell}$ belongs to ${\cal R}_G$.
By the elimination property of $\lexi$,
${\cal G}_d \cap {\cal R}_G$
is a Gr\"obner basis of $I_G$ with respect to $\lexi$.

Suppose that ${\cal P}_G$ is a simplex with $I_G \neq ( 0 )$.
Let $G^{(1)},\ldots,G^{(r)}$ be the connected components of $G$.  
Let $G^{(1)}$ have a loop of $G$.
Then thanks to the condition $(*)$, all loops of $G$ are in $G^{(1)}$.
By virtue of the condition $(*)$ together with Propositions
\ref{evenclosedwalk} and \ref{simplexcondition},
we have
$
{\cal R}_G  / I_G = \left( {\cal R}_{G_m}  / I_{G_m} \right) [\{x_{i j}\}_{\{i,j\} \in E(G) \setminus E(G_m)}]
$
and hence
$I_G$, $I_{G^{(1)}}$ and $I_{G_m}$ have the same minimal set of binomial generators.
Thus ${\cal G}_m$ is a Gr\"obner basis of $I_G$.
\end{proof}

\section{Ehrhart polynomials}

If ${\cal P} \subset \RR^d$ is an integral convex polytope,
then we define $i({\cal P},m)$ by
$
i({\cal P},m) = |m {\cal P} \cap \ZZ^d|
.$
It is known that $i({\cal P},m)$ is a polynomial in $m$ of degree $\dim {\cal P}$.
We call $i({\cal P},m)$ the {\it Ehrhart polynomial} of ${\cal P}$.
If ${\rm vol}({\cal P})$ is the normalized volume of ${\cal P}$,
then
the leading coefficient of $i({\cal P},m)$ is ${\rm vol}({\cal P})/ (\dim {\cal P})!$.
We refer the reader to \cite{Hibi} for the detailed information about
Ehrhart polynomials of convex polytopes.

\begin{Theorem}
\label{normalizedvolume}
Let $G$ be a graph in Theorem \ref{Boston} (iii).
Let $W$ denote the set of those vertices 
$i \in V(G)$
such that $G$ has no loop at $i$ and
$G'$ the induced subgraph of $G$ on $W$.
Then the Ehrhart polynomial $i({\cal P}_G,m)$
and the normalized volume ${\rm vol}({\cal P}_G)$
of the edge polytope ${\cal P}_G$
are as follows;
\begin{itemize}
\item[($\alpha $)]
If $G$ is the complete bipartite graph 
on the vertex set 
$V_1 \cup V_2$ with $|V_1| = p$ and $|V_2| = q$,
then we have
$
i({\cal P}_G,m) = { p + m -1 \choose p-1}{ q + m -1 \choose q-1}
$
and
$
{\rm vol} ({\cal P}_G) = { p + q -2 \choose p-1};
$
\item[($\beta $)]
If $G'$ is the complete bipartite graph 
on the vertex set 
$V_1 \cup V_2$ with $|V_1| = p$ and $|V_2| = q$,
then we have
$
i({\cal P}_G,m) = { p + m \choose p}{ q + m  \choose q}
$
and
$
{\rm vol} ({\cal P}_G) = { p + q \choose p};
$
\item[($\gamma $)]
If $G$ possesses $p$ loops and $V(G) = d$, then we have
$
i({\cal P}_G,m) = 
\sum_{j=1}^p 
{ j+m-2 \choose j-1}
{ d-j+m \choose d-j}
$
and
$
{\rm vol} ({\cal P}_G) 
= 
\sum_{j=1}^p {d-1 \choose j-1}
.$
\end{itemize}
\end{Theorem}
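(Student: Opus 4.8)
The plan is to turn each computation into a lattice-point count. By Theorem \ref{Boston} the toric ideal $I_G$ has a squarefree initial ideal, hence ${\cal P}_G$ admits a unimodular triangulation and is a normal (integrally closed) lattice polytope \cite{Stu}. Therefore $i({\cal P}_G,m)=\dim_K K[G]_m$ is the number of monomials $\tb^{\ab}$ (with $\ab\in\ZZ_{\geq 0}^d$, $|\ab|=2m$) that can be written as a product of exactly $m$ of the generators $\tb^{e_1},\dots,\tb^{e_n}$, and by normality these correspond bijectively to the lattice points of $m{\cal P}_G$. In each of the three cases I would first describe the admissible exponent vectors $\ab$ and then count them; the normalized volume is read off afterwards as $(\dim{\cal P}_G)!$ times the leading coefficient of the resulting polynomial. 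In case ($\alpha$) the generators are exactly the $t_it_j$ with $i\in V_1$, $j\in V_2$, so $\tb^{\ab}$ is a product of $m$ of them precisely when the exponent vector is balanced, i.e.\ its restrictions to $V_1$ and to $V_2$ each sum to $m$; counting such vectors gives $\binom{p+m-1}{p-1}\binom{q+m-1}{q-1}$, and the volume $\binom{p+q-2}{p-1}$ follows since $\dim{\cal P}_G=p+q-2$.

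For ($\beta$) I would record a product of $m$ generators by the multiplicities $n_0$ of the loop $t_1^2$, $n_a$ and $n_b$ of the edges from the loop vertex into $V_1$ and into $V_2$, and $n_{ab}$ of the edges of $G'$. Writing $A$ and $B$ for the sums of $\ab$ over $V_1$ and $V_2$ and $c=2m-A-B$ for the loop coordinate, the linear system $n_a+n_{ab}=A$, $n_b+n_{ab}=B$, $2n_0+n_a+n_b=c$ is solvable in nonnegative integers exactly when $\max(0,m-c)\leq\min(A,B)$, which simplifies to $A\leq m$ and $B\leq m$ (the loop coordinate absorbs any imbalance $|A-B|$). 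Thus the lattice points are parametrized by independent choices on $V_1$ and $V_2$ with $0\leq A,B\leq m$, and summing $\binom{A+p-1}{p-1}\binom{B+q-1}{q-1}$ and applying the hockey-stick identity yields $\binom{p+m}{p}\binom{q+m}{q}$; since here $\dim{\cal P}_G=p+q$, the volume is $\binom{p+q}{p}$.

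For ($\gamma$), since $G'$ has no edge, each variable $t_w$ with $w\in W$ can enter a generator only through an edge $t_it_w$ to a loop vertex, whereas the loop vertices carry the full second Veronese of $K[t_1,\dots,t_p]$ (they form a complete graph with a loop at every vertex). A short feasibility argument---assign the $R:=\sum_{w\in W}a_w$ units on $W$ to loop vertices, then observe that the residual loop degree $2m-2R$ is even and splits into loops and loop-edges, the relevant join being complete---shows that $\ab$ is admissible if and only if $R\leq m$. This gives $i({\cal P}_G,m)=\sum_{R=0}^{m}\binom{R+d-p-1}{d-p-1}\binom{2m-R+p-1}{p-1}$. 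The remaining, and I expect hardest, step is to bring this into the stated form $\sum_{j=1}^{p}\binom{j+m-2}{j-1}\binom{d-j+m}{d-j}$: I would either check that the two degree-$(d-1)$ polynomials agree by a Vandermonde/hockey-stick manipulation, or---more in the spirit of the previous section---count instead the standard (sorted) monomials of the quadratic Gr\"obner basis ${\cal G}_m$ of Theorem \ref{Boston}, whose natural stratification by a loop index $j$ should reproduce the summands directly. Either way, extracting the degree-$(d-1)$ coefficient and multiplying by $(d-1)!$ gives ${\rm vol}({\cal P}_G)=\sum_{j=1}^{p}\binom{d-1}{j-1}$.
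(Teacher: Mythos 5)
Your overall strategy coincides with the paper's opening move: the squarefree initial ideal from Theorem \ref{Boston} gives normality of $K[G]$, so $i({\cal P}_G,m)=\dim_K(K[G])_m$ and one is reduced to a counting problem. Where you diverge is in how the count is done. The paper counts the \emph{standard monomials} of the sorting-order Gr\"obner basis in all three cases, i.e.\ sequences $1\leq i_1\leq\cdots\leq i_m\leq j_1\leq\cdots\leq j_m\leq d$ with each $\{i_r,j_r\}\in E(G)$; in case ($\gamma$) the membership condition is simply $i_m\leq p$, and stratifying by $j=i_m$ yields the summands $\binom{j+m-2}{j-1}\binom{d-j+m}{d-j}$ in one line. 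You instead count lattice points of $m{\cal P}_G$ directly by a feasibility analysis of exponent vectors. Your analyses are correct: the balanced condition in ($\alpha$), the solvability criterion $A\leq m$, $B\leq m$ in ($\beta$) (the system you write down does reduce to $\max(0,A+B-m)\leq\min(A,B)$, and the hockey-stick summation gives $\binom{p+m}{p}\binom{q+m}{q}$), and the criterion $R\leq m$ in ($\gamma$) all check out, as do the dimension counts used to extract the volumes.

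The one genuine gap is the end of ($\gamma$): your stratification by $R=\sum_{w\in W}a_w$ produces $\sum_{R=0}^{m}\binom{R+d-p-1}{d-p-1}\binom{2m-R+p-1}{p-1}$, which is equal to the stated $\sum_{j=1}^{p}\binom{j+m-2}{j-1}\binom{d-j+m}{d-j}$ (I checked small cases), but you do not prove the identity, and it is not an off-the-shelf Vandermonde/hockey-stick instance. Both of your proposed repairs work: the second one (count sorted standard monomials stratified by the loop index $j=i_m$) is precisely the paper's argument; alternatively, you could close the gap without any binomial manipulation by observing that your sum counts lattice points of $m{\cal P}_G$ while the target sum counts standard monomials of degree $m$, and both equal $\dim_K(K[G])_m$ by normality and by the Gr\"obner basis respectively. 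Either remark would complete the proof; as written, ($\gamma$) stops one step short.
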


\begin{proof}
Since $I_G$ possesses a squarefree initial ideal,
$K[G]$ is normal.
(It is known  \cite[Corollary 2.3]{normal} that $K[G]$ is normal 
if and only if $G$ satisfies the ``odd cycle condition.")
Hence, the Ehrhart polynomial $i({\cal P}_G,m)$
coincides with the Hilbert function $\dim_k (K[G])_m $ of the homogeneous
$K$-algebra $K[G] = \bigoplus_{m=0}^\infty (K[G])_m $.
The squarefree quadratic initial ideal of Theorem \ref{Boston}
guarantees that the set of monomials $x_{i_1 j_1} x_{i_2 j_2} \cdots x_{i_m j_m} $
with each $\{i_r, j_r\} \in E(G)$ such that
$1 \leq i_1 \leq  \cdots \leq i_m \leq j_1   \leq \cdots \leq j_m \leq d$
is a $K$-basis of $(K[G])_m$.

($\alpha $)
See, e.g.,  \cite[Corollary 2.7 (b)]{multi}.
($\beta $)
Let $V(G') = \{1,\ldots,p\} \cup \{p+2,\ldots,d\}$ and $G$ has a loop at $p+1$.
Then $\{i_r ,j_r\} \in E(G)$ if and only if 
$i_r \leq p+1$ and $j_r \geq p+1$.
Hence, the number of sequence above is 
$
i({\cal P}_G,m) = { p + m \choose p}{ q + m  \choose q}
$.
Thus, the leading coefficient of $(d-1)!  i({\cal P}_G,m) $ is
${ p + q \choose p}$.
($\gamma $)
Let $V(G) \setminus W = \{1,2,\ldots,p\}$.
Then $\{i_r ,j_r\} \notin E(G)$ if and only if 
$i_r,j_r > p$.
Since
the number of sequence above with $i_m = j$ is
$
{ j+m-2 \choose j-1}
{ d-j+m \choose d-j}
,$
we have the required formula for $i({\cal P}_G,m)$.
Hence, the leading coefficient of $(d-1)!  i({\cal P}_G,m) $ is $ \sum_{j=1}^p {d-1 \choose j-1}$.
\end{proof}

\bigskip

\noindent
Hidefumi Ohsugi\\
Department of Mathematics,
College of Science,
Rikkyo University.\\
{\tt ohsugi@rkmath.rikkyo.ac.jp}

\bigskip

\noindent
Takayuki Hibi\\
Graduate School of Information Science and Technology,
Osaka University.\\
{\tt hibi@math.sci.osaka-u.ac.jp}

\end{document}